\newcommand{\bburl}[1]{\textcolor{blue}{\url{#1}}}
\newcommand{\burl}[1]{\textcolor{blue}{\url{#1}}}
\numberwithin{equation}{section}
\newtheorem{thm}{Theorem}[section]
\newtheorem{lem}[thm]{Lemma}
\theoremstyle{plain}
\newtheorem{lemma}[thm]{Lemma}
\newtheorem{theorem}[thm]{Theorem}
\newtheorem{conjecture}[thm]{Conjecture}
\newtheorem{rem}[thm]{Remark}
\newtheorem{remark}[thm]{Remark}
\newcommand\be{\begin{equation}}
\newcommand\ee{\end{equation}}
\newcommand\bee{\begin{equation*}}
\newcommand\eee{\end{equation*}}
\newcommand\bea{\begin{eqnarray}}
\newcommand\eea{\end{eqnarray}}
\newcommand\beae{\begin{eqnarray*}}
\newcommand\eeae{\end{eqnarray*}}
\newcommand\bi{\begin{itemize}}
\newcommand\ei{\end{itemize}}
\newcommand\ben{\begin{enumerate}}
\newcommand\een{\end{enumerate}}
\newcommand\bc{\begin{center}}
\newcommand\ec{\end{center}}
\newcommand\ba{\begin{array}}
\newcommand\ea{\end{array}}
\newcommand{\N}{\mathbb{N}}
\newcommand\frakfamily{\usefont{U}{yfrak}{m}{n}}
\DeclareTextFontCommand{\textfrak}{\frakfamily}
\newcommand{\hr}[1]{\href{#1}{\url{#1}}}
\newcommand\abs[1]{\left|#1\right|}
\newcommand{\floor}[1]{\left\lfloor#1\right\rfloor}
\newcommand{\E}[1]{\mathbb{E}[#1]}
\title{Limiting Distributions in Generalized Zeckendorf Decompositions}
\author{Alexandre Gueganic}
\email{\textcolor{blue}{\href{mailto:ag15@williams.edu}{ag15@williams.edu}}}
\address{Department of Mathematics and Statistics, Williams College, Williamstown, MA 01267}
\author{Granger Carty}
\email{\textcolor{blue}{\href{mailto:Granger.L.Carty@williams.edu}{Granger.L.Carty@williams.edu}}}
\address{Department of Mathematics and Statistics, Williams College, Williamstown, MA 01267}
\author{Yujin H. Kim}
\email{\textcolor{blue}
{\href{mailto:yujin.kim@columbia.edu}
{yujin.kim@columbia.edu}}}
\address{Department of Mathematics, Columbia University, New York, NY 10027}
\author{Steven J. Miller}
\email{\textcolor{blue}{\href{mailto:sjm1@williams.edu}{sjm1@williams.edu}}}
\address{Department of Mathematics and Statistics, Williams College, Williamstown, MA 01267}
\author{Alina Shubina}
\email{\textcolor{blue}{\href{mailto:as31@williams.edu}{as31@williams.edu}}}
\address{Department of Mathematics and Statistics, Williams College, Williamstown, MA 01267}
\author{Shannon Sweitzer}
\email{\textcolor{blue}{\href{mailto:sswei001@ucr.edu}{sswei001@ucr.edu}}}
\address{Department of Mathematics, University California, Riverside, CA 92521}
\author{Eric Winsor}
\email{\textcolor{blue}{\href{mailto:rcwnsr@umich.edu}{rcwnsr@umich.edu}}}
\address{Department of Mathematics, University of Michigan, Ann Arbor, MI 48109 }
\author{Jianing Yang}
\email{\textcolor{blue}{\href{mailto:jyang@colby.edu}{jyang@colby.edu}}}
\address{Department of Mathematics and Statistics, Colby College, Waterville, ME 04901}
\thanks{This research was supported by NSF grants DMS1265673 and DMS1347804 and Williams College, and by the Finnerty Fund.}
\subjclass[2010]{11G05 (primary), 11G07, 11G40, 11M41 (secondary)}
\keywords{Zeckendorf Decompositions, Lyapunov Central Limit Theorem. }
\date{\today}
\begin{document}
\maketitle

\begin{abstract} An equivalent definition of the Fibonacci numbers is that they are the unique sequence such that every integer can be written uniquely as a sum of non-adjacent terms. We can view this as we have bins of length 1, we can take at most one element from a bin, and if we choose an element from a bin we cannot take one from a neighboring bin. We generalize to allowing bins of varying length and restrictions as to how many elements may be used in a decomposition. We derive conditions on when the resulting sequences have uniqueness of decomposition, and (similar to the Fibonacci case) when the number of summands converges to a Gaussian; the main tool in the proofs here is the Lyaponuv Central Limit Theorem.
\end{abstract}

\tableofcontents

\section{Introduction}

\subsection{Preliminaries}
The Fibonacci numbers are normally defined by the recurrence $F_{n+1} = F_n + F_{n-1}$, with, of course, two initial conditions. If we take $F_1 = 1$ and $F_2 = 2$ one of many properties is Zeckendorf's Theorem \cite{Ze}: Every positive integer can be written uniquely as a sum of non-adjacent Fibonacci numbers. Interestingly, this is an equivalent definition of the Fibonaccis; explicitly, they are the unique sequence of numbers such that every integer can be written as a sum of non-adjacent elements of the set. This correspondence has led to numerous papers investigating connections between sequences and decomposition laws, and properties of the decompositions (such as on average how many summands are needed, what is the distribution of gaps between summands, what is the longest gap between summands). We often refer to these as generalized Zeckendorf decompositions or legal decompositions for the given law; for 2019 we have $$2019  \ = \  1597 + 377 + 34 + 8 + 3 \ = \ F_{16} + F_{13} + F_{8} + F_5 + F_3.$$

There is now an extensive literature on the subject; see for example \cite{Al, BBGILMT, BILMT, Br1, Br2, CFHMN1, CFHMN2, CFHMNPX, Day, DDKMMV, KKMW, Fr, GTNP, Ha, Ho, HW, Ke, KKMW, MW1, MW2, Ste1, Ste2}. Of these, the most relevant for our investigations below is \cite{CFHMN1}. There the authors generalize the Fibonacci decomposition law by adopting a binning perspective. Explicitly, fix positive integers $s$ and $b$. The $(s, b)$-Generacci sequence is defined as follows. Consider a series of bins of length $b$. We can choose at most one element from a bin, and if we choose an element we cannot take an element from any of the $s$ bins immediately to the left (and thus we also cannot take an element from any of the $s$ bins immediately to the right). The Fibonaccis correspond to the case $s=b=1$, and choosing the appropriate initial conditions always yields unique decomposition. For example, the $(1,2)$-Generacci sequence begins \begin{equation} \underbracket{\ 1,\ 2\ }_{{\rm Bin\ 1}}\ ,\ \underbracket{\    3,\  4 \ }_{{\rm Bin \ 2}}\ ,\ \underbracket{\      5,\ 8 \ }_{{\rm Bin \ 3}}\ ,\ \underbracket{\     11,\  16 \ }_{{\rm Bin \ 4}}\ ,\ \underbracket{\     21,\  32 \ }_{{\rm Bin  \ 5}}\ ,\ \underbracket{\     43,\  64 \ }_{{\rm Bin \ 6 }}\ , \ \underbracket{\      85,\ 128 \ }_{{\rm Bin \ 7}}\ ,\ \underbracket{\     171,\  256 \ }_{{\rm Bin \ 8}}\ ,\ \ldots.\end{equation}

In previous works all bins had the same length, and a legal decomposition could have at most one element from a bin. We extend these results by now letting the $n$\textsuperscript{{\rm th}} bin have length $b_n \ge 1$, for each $n$.
Furthermore, we choose a set
$A_n \subset \{0, 1, 2, \dots, b_n\}$, which is the set of the number of allowable elements we can choose from the $n$\textsuperscript{{\rm th}} bin in our decomposition.
Finally, we select an adjacency number $a$ such that we cannot take elements from two different bins unless there are at least $a$ bins between them.
Thus, if $b_8 = 5$, $A_8 = \{0, 1, 3\}$, and $a=2$, then we may take $0$, $1$ or $3$ elements from the eighth bin (which has length $5$); if we do take an element from the eighth bin, then we may not take any elements from the sixth, seventh, ninth or tenth bins in our decomposition. We construct the sequence as follows. We set $1$ as the first element of the first bin (we choose $1$ and not $0$ to retain the possibility of having unique decompositions). If we have constructed the first $k$ elements, the next term in the sequence is the least integer which cannot be obtained by our construction rule. We refer to these as a $(\{b_n\}, \{A_n\}, a)$-Sequence; the Fibonacci sequence is $b_n =  1$, $A_n = \{0,1\}$ and $a=1$.

\subsection{Results}


In Section \ref{unique_decomp} we study sequences with no adjacency condition (i.e., $(\{b_n\}, \{A_n\}, 0)$-Sequences), and exactly determine when these sequences give us unique decomposition of the positive integers (see \cite{CHHMPV} for conditions on when generalized Zeckendorf decompositions have the minimal number of summands among all decompositions). In particular, we prove the following.

\begin{theorem}\label{thm:uniquenessnoadjcondition} A $(\{b_n\}, \{A_n\}, 0)$-Sequence has uniqueness of decomposition (i.e., there is a unique legal decomposition for each positive integer) if and only if for every positive $n$ we have \begin{equation} A_n \ \in\ \left\{\left\{0,1\right\},\ \left\{0,1,\ldots,b_n-1\right\}, \ \left\{0,1,\ldots,b_n\right\}\right\}.\end{equation}
\end{theorem}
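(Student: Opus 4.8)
The plan is to recognize the $a=0$ construction as a mixed-radix numeration system and to reduce the theorem to a single intra-bin condition. Let $q_n$ denote one more than the largest integer representable using only bins $1,\dots,n-1$ (so $q_1=1$), and suppose inductively that bins $1,\dots,n-1$ represent each of $0,1,\dots,q_n-1$ exactly once. Since the integers represented by the lower bins form the block $\{0,\dots,q_n-1\}$, the representable set is a disjoint union of length-$q_n$ blocks aligned at multiples of $q_n$; hence the greedily chosen elements of bin $n$ are all multiples of $q_n$, and writing $c_{n,j}=q_n d_{n,j}$ the legal contributions of bin $n$ are $q_n$ times the \emph{digit-sums} $\{\sum_{i\in I} d_{n,i} : I\subseteq\{1,\dots,b_n\},\ |I|\in A_n\}$. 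I would then prove the reduction: the sequence has unique decomposition if and only if, for every $n$, these digit-sums are exactly $\{0,1,\dots,r_n-1\}$ with each value attained by a unique legal selection, where $r_n=\sum_{k\in A_n}\binom{b_n}{k}$ is the number of legal selections; in that case $q_{n+1}=r_n q_n$ and the decomposition is the associated mixed-radix expansion.

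Both directions of the reduction are short. If every bin is such a \emph{perfect digit set}, uniqueness is the standard uniqueness of mixed-radix expansions. Conversely, if for some $n$ the digit-sums repeat a value, that value already has two legal decompositions; and if they omit some value below their maximum, then letting $g$ be the least omitted digit-sum, greedy sets $c_{n+1,1}=gq_n$, and a representable value $q_n\cdot t$ with attained digit-sum $t>g$ acquires a second decomposition using $c_{n+1,1}$ together with lower bins. Thus it suffices to decide for which $A_n$ bin $n$ is a perfect digit set.

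For the three listed families I would compute the greedy digits directly. As long as every placed element may legally be used together (the cap $\max A_n$ is not yet binding and $A_n$ has no internal gap), greedy produces $d_{n,j}=2^{j-1}$ and the digit-sums fill $\{0,\dots,2^{j}-1\}$, i.e.\ ordinary binary. For $A_n=\{0,\dots,b_n\}$ this persists to the end, giving $\{0,\dots,2^{b_n}-1\}$; for $A_n=\{0,\dots,b_n-1\}$ the only forbidden selection is the full one, whose sum $2^{b_n}-1$ is the unique largest value, leaving exactly $\{0,\dots,2^{b_n}-2\}$; and for $A_n=\{0,1\}$ the ``at most one'' rule instead forces $d_{n,j}=j$ and digit-sums $\{0,1,\dots,b_n\}$. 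Each is a perfect digit set, so the reduction yields unique decomposition.

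For necessity I would first rule out the degenerate cases: $0\in A_n$ is forced (otherwise $q_n-1$ is unrepresentable, since every legal decomposition would be compelled to draw at least one element, hence at least $q_n$, from bin $n$), and $1\in A_n$ is forced (otherwise the digit-sum $1$ needed to represent $c_{n,1}=q_n$ is unattainable, as $d_{n,1}=1$ but no single element may be taken). Granting $\{0,1\}\subseteq A_n$, I would run the binary phase above until the first index at which either the cap $\max A_n$ or an internal gap of $A_n$ first forbids using all currently placed elements; greedy is then forced to insert an element equal to $2^{m}-1$ for the relevant $m$. The crux, and the step I expect to be the main obstacle, is to show that whenever $A_n$ is none of the three listed sets this insertion, together with the count restriction and the number of elements still to be placed, necessarily creates a repeated or a skipped digit-sum before bin $n$ is filled, so that bin $n$ is not a perfect digit set. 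Concretely, for an initial segment $A_n=\{0,\dots,k\}$ with $2\le k\le b_n-2$ the first $k+2$ greedy elements (which are forced independently of $b_n$) already exhibit a gap at digit-sum $2^{k+2}-6$, and one must check this obstruction cannot be repaired as the remaining elements are added; a parallel but more involved bookkeeping handles the case where $A_n$ is not an initial segment, in which the forced element $2^{m}-1$ equals the sum of the $m$ preceding digits and collides once a larger admissible count becomes usable. Carrying out this gap/collision analysis uniformly in $b_n$ and $A_n$ is the technical heart of the argument.
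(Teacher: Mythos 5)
Your framework is sound, but it is essentially the paper's own strategy in different clothing: the observation that every term beyond the first $n-1$ bins is a multiple of $q_n$ is Lemma \ref{interval generation}, the reduction to analyzing a single bin is the paper's reduction, and your three ``perfect digit set'' computations (binary digits for $A_n=\{0,1,\ldots,b_n\}$ and $A_n=\{0,1,\ldots,b_n-1\}$, digits $1,2,\ldots,b_n$ for $A_n=\{0,1\}$) are exactly the paper's sufficiency Cases I--III. The genuine gap is the necessity direction: you name it ``the technical heart'' and then do not carry it out, and it is most of the content of the theorem. The paper completes it, after the reduction, by two explicit collision constructions: (i) if $\{0,\ldots,k\}\subseteq A_1$, $k+1\notin A_1$ and $2\le k\le b_1-2$, the forced terms are $1,2,4,\ldots,2^k$, then $2^{k+1}-1$, then $2^{k+1}+2^k-2$ (whether this last term lands in bin $1$ or bin $2$ is irrelevant), and uniqueness fails via the identity $(2^{k+1}-1)+2^k=(2^{k+1}+2^k-2)+1$; (ii) if $2\notin A_1$ and $k=\min\{x\in A_1:x>1\}\ge 3$, then for $b_1\ge k+2$ two distinct $k$-element subsets of $\{1,\ldots,k+2\}$ have equal sums, with separate subcases for $b_1=k$ and $b_1=k+1$. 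Note that a skipped digit-sum is never a permanent gap --- by construction it becomes the very next term of the sequence --- so the failure must be exhibited as a collision just above the skipped value, which is precisely what the paper's identities do. Your one concrete computation in this direction is also incorrect: after $1,2,\ldots,2^k,2^{k+1}-1$ the first skipped digit-sum is $2^{k+1}+2^k-2$, not $2^{k+2}-6$ (the two agree only when $k=2$).

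Separately, the omission branch of your reduction lemma fails as stated. You claim that if the digit-sums $D_n$ skip a value $g$ below their maximum, then any attained $t>g$ yields a second decomposition $c_{n+1,1}+q_n(t-g)$ of $q_nt$; this requires $t-g\in D_n$, which need not hold. A greedy-realizable counterexample: $A_n=\{0,1,4\}$, $b_n=4$ gives digits $1,2,3,4$, so $D_n=\{0,1,2,3,4,10\}$, $g=5$, and the only attained $t>g$ is $t=10$ with $t-g=5\notin D_n$. Indeed, after $c_{n+1,1}=5q_n$ is added there is still no collision (the attained sets $\{0,\ldots,4,10\}$ and $\{5,\ldots,9,15\}$ are disjoint); non-uniqueness only appears after the next term $11q_n$, via $10+5=4+11$. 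So your reduction needs the iterative ``blocks eventually merge'' argument (plus a check that the two representations use legal counts when they draw on the same later bin), not the one-step argument you give. Both implications in your chain ``$A_n$ bad $\Rightarrow$ bin $n$ not perfect $\Rightarrow$ uniqueness fails'' therefore require substantial work that the proposal does not supply, whereas the paper's direct case-by-case collisions close everything at once.
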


In Section $3$ we establish the following Lyapunov central limit type theorems associated to certain $({b_n}, {A_n}, 0)$-Sequences. These results are similar to those from earlier work on Zeckendorf decompositions. Lekkerkerker \cite{Le} proved that the average number of summands in a Zeckendorf decomposition for integers in $[F_n, F_{n+1}]$ tends to $\frac{n}{\varphi^2+1}$, where $\varphi = \frac{1+ \sqrt{5}}{2}$; others (see for example \cite{KKMW}) extended this result to prove that as $n \to \infty$, the distribution of the number of summands in the Zeckendorf decomposition for integers in $[F_n, F_{n+1}]$ is Gaussian. In Section \ref{proof_01A_n} we prove a similar result for our sequences, using Lyapunov's Central Limit Theorem (see Theorem \ref{lyapunovCLT}).

\begin{theorem}\label{01A_n} Consider a $(\{b_n\}, \{0, 1\}, 0)$-Sequence. For an integer $x$, let $Y_n(x) = 1$ if an element of the $n$\textsuperscript{{\rm th}} bin appears in $x$'s decomposition, and $Y_n(x) = 0$ otherwise; thus, if the largest summand in $x$'s decomposition is from bin $N$ then the total number of summands in this decomposition is $Y_1(x) + \cdots + Y_N(x)$. If $\sum_{n=1}^{\infty} 1/b_n$ diverges, then the distribution of the number of summands of integers whose largest summand is in bin $N$ converges to a Gaussian in the sense of Lyapunov as $N\to \infty$. \label{varybinsize}
\end{theorem}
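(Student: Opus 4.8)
The plan is to apply Lyapunov's Central Limit Theorem (Theorem \ref{lyapunovCLT}) to the sequence of indicator random variables $Y_1, \dots, Y_N$, where we place the uniform probability measure on the integers $x$ whose largest summand lies in bin $N$. Writing $S_N = Y_1 + \cdots + Y_N$ for the number of summands, the first task is to understand the joint distribution of the $Y_n$, or at least their means and variances and the structure of their dependence. For a $(\{b_n\}, \{0,1\}, 0)$-Sequence we take at most one element from each bin and there is no adjacency restriction ($a=0$), so in principle any subset of bins can be simultaneously occupied; this is precisely the setting that should keep the dependence between the $Y_n$ weak enough to be tractable. First I would set up the exact counting recurrence: letting the number of legal decompositions using bins $1, \dots, n$ play the role of a partition/normalization function, I would express $\E{Y_n}$ (the probability that bin $n$ is used) and the pairwise covariances $\mathrm{Cov}(Y_m, Y_n)$ in terms of these counts. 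The heuristic is that since bin $n$ contributes a factor roughly proportional to $b_n$ to the count of decompositions, the probability that bin $n$ is occupied should behave like $p_n \asymp b_n/(1+b_n)$ up to lower-order corrections, giving $\V{Y_n} \asymp p_n(1-p_n) \asymp 1/b_n$.

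With those moment estimates in hand, the next step is to compute the variance of the sum, $s_N^2 = \V{S_N}$. If the variables were independent this would be $\sum_{n=1}^N \V{Y_n}$, and the key quantitative input is that this grows without bound: the hypothesis $\sum 1/b_n = \infty$ is exactly what forces $s_N^2 \to \infty$, since $\V{Y_n} \asymp 1/b_n$. I would therefore prove a two-sided bound $c \sum_{n=1}^N 1/b_n \le s_N^2 \le C \sum_{n=1}^N 1/b_n$ for absolute constants, controlling the covariance contributions and showing they do not overwhelm the diagonal terms. Because the $Y_n$ are not independent, I cannot invoke the independent form of Lyapunov directly; the cleanest route is to establish that the dependence decays quickly enough that one can either (i) verify a martingale-difference or block decomposition, or (ii) appeal to a version of the CLT for weakly dependent (e.g.\ $m$-dependent or mixing) arrays. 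Given the framing of the statement ``in the sense of Lyapunov,'' I expect the intended argument treats the $Y_n$ as effectively independent after controlling correlations, so I would aim to show $\mathrm{Cov}(Y_m, Y_n)$ is negligible for $|m-n|$ large and absorb the rest.

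Assuming a Lyapunov-type CLT applies, the final step is to verify the Lyapunov condition itself: for some $\delta > 0$ (taking $\delta = 1$ is convenient since the $Y_n$ are bounded),
\begin{equation}
\frac{1}{s_N^{2+\delta}} \sum_{n=1}^N \E{|Y_n - \E{Y_n}|^{2+\delta}} \ \longrightarrow\ 0 \qquad (N \to \infty).
\end{equation}
Since each $Y_n \in \{0,1\}$, we have $\E{|Y_n - \E{Y_n}|^{3}} \le \V{Y_n}$, so with $\delta = 1$ the numerator is at most $\sum_{n=1}^N \V{Y_n} \asymp s_N^2$, and the ratio is bounded by $C/s_N \to 0$ precisely because $s_N^2 \asymp \sum 1/b_n \to \infty$. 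Thus the divergence hypothesis does double duty: it guarantees both that the normalizing variance blows up and that the Lyapunov ratio vanishes. Concluding, $(S_N - \E{S_N})/s_N$ converges in distribution to a standard Gaussian, which is the claimed convergence.

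The main obstacle I anticipate is the dependence among the $Y_n$: without an adjacency condition the occupancy of different bins is genuinely correlated through the global constraint that the decomposition represents a fixed integer, so the honest work is in rigorously bounding the covariances and justifying that a Lyapunov (independent-style) conclusion survives the dependence. Establishing the sharp two-sided estimate $\V{Y_n} \asymp 1/b_n$ and the negligibility of off-diagonal covariances via the decomposition-counting recurrence is where the technical effort concentrates; the verification of the Lyapunov condition itself is then essentially immediate from boundedness of the summands.
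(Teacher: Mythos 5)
There is a genuine gap, and it sits exactly where you locate ``the honest work'' of the proof. You treat the variables $Y_1,\dots,Y_N$ as dependent, on the grounds that bin occupancies are ``genuinely correlated through the global constraint that the decomposition represents a fixed integer,'' and you plan to resolve this by bounding covariances and invoking a CLT for weakly dependent arrays. But no such dependence exists, and the machinery you propose is never actually carried out --- it is deferred as an anticipated obstacle. The key structural fact, which the paper's proof uses implicitly and which your proposal misses, is this: since $a=0$ and $A_n=\{0,1\}$, \emph{every} combination of per-bin choices (either nothing or exactly one of the $b_n$ elements from bin $n$, for each $n<N$, together with one of the $b_N$ elements of bin $N$) is a legal decomposition, and by uniqueness of decomposition (Theorem \ref{thm:uniquenessnoadjcondition}) distinct tuples of choices yield distinct integers. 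Hence the integers whose largest summand is in bin $N$ are in bijection with the product set $\prod_{n<N}\{0,1,\dots,b_n\}\times\{1,\dots,b_N\}$, and the uniform measure on those integers is exactly a product measure. The $Y_n$ for $n<N$ are therefore exactly independent Bernoulli variables with $P(Y_n=1)=b_n/(b_n+1)$; there are no covariance terms to control, and the independent form of Lyapunov's theorem applies directly.

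Once that independence is in hand, the rest of your argument is essentially the paper's: $\sigma_n^2=b_n/(b_n+1)^2\asymp 1/b_n$, so $s_N^2=\sum_{n<N}\sigma_n^2$ diverges precisely when $\sum 1/b_n$ does; and since $|Y_n-\mu_n|\le 1$, the absolute central moments satisfy $\mathbb{E}[|Y_n-\mu_n|^{2+\delta}]\le\sigma_n^2$, so the Lyapunov ratio is bounded by $s_N^2/s_N^{2+\delta}=s_N^{-\delta}\to 0$. Those computations in your proposal are correct (the paper does them with exact values rather than $\asymp$ bounds). So the defect is not in the analytic estimates but in the probabilistic setup: as written, your proof is incomplete because its central step --- justifying a CLT in the presence of dependence --- is only sketched, and it is also misdirected, since recognizing the exact product structure eliminates the problem entirely rather than requiring mixing or $m$-dependence arguments to tame it.
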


In Section \ref{proof_3.thm_bnA}, we relax our assumptions to allow multiple summands from each bin, and let $A_n$ vary with $n$; we examine how the conditions for Gaussianity change given this generalization in the following two theorems.

\begin{theorem}\label{3.thm_bnA} Consider a $(\{b_n\}, \{A\}, 0)$-Sequence, where each $A_n = A \subseteq \{0, 1, \dots, b\}$ with $b \le \min(\{b_n\})$. Let $\{Y_n\}$ be the sequence of independent random variables representing the number of summands chosen from each bin. Thus if the largest summand of the decomposition of an integer $x$ is from bin $N$, then the total number of summands in this decomposition is $Y_1(x) + \dots + Y_N(x)$.
If the growth of $\{b_n\}$ is slower than $n^{\frac{1}{m-m^{'}}}$, where $m = \max(A)$ and $m^{'} = \max(A-\{m\})$, then the distribution of the number of summands of integers whose largest summand is in bin $N$ converges to a Gaussian distribution in the Lyapunov sense as $N \to \infty$.
\end{theorem}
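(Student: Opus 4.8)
The plan is to apply Lyapunov's Central Limit Theorem (Theorem \ref{lyapunovCLT}) to the independent random variables $Y_1,\dots,Y_N$. The first step is to pin down the law of each $Y_n$. Because there is no adjacency condition ($a=0$), the only constraint is the within-bin rule that the number of chosen elements lie in $A$, so selecting a uniformly random integer whose largest summand is in bin $N$ amounts to choosing, independently across bins, a legal configuration in each bin. Since the number of ways to take $j$ elements from a bin of length $b_n$ is $\binom{b_n}{j}$ (and $j \le m = \max(A) \le b \le b_n$, so this is nonzero), we get
\begin{equation}
\PP{Y_n = j} \ = \ \frac{\binom{b_n}{j}}{\sum_{i \in A}\binom{b_n}{i}}, \qquad j \in A.
\end{equation}
The conditioning forces bin $N$ to be nonempty and all higher bins empty, but bin $N$ contributes only $O(1)$ summands and does not affect the limit, so it suffices to treat $Y_1,\dots,Y_{N-1}$ under the displayed law.

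Next I would extract the asymptotics of $\mu_n := \E{Y_n}$ and $\sigma_n^2 := \V{Y_n}$ as $b_n \to \infty$. With $m=\max(A)$ and $m'=\max(A\setminus\{m\})$, the central computation is the binomial ratio
\begin{equation}
\frac{\binom{b_n}{m'}}{\binom{b_n}{m}} \ = \ \frac{m!}{m'!}\cdot\frac{1}{(b_n-m')(b_n-m'-1)\cdots(b_n-m+1)} \ \sim \ \frac{m!}{m'!}\,b_n^{-(m-m')},
\end{equation}
where the denominator has exactly $m-m'$ factors, while every $a \in A$ with $a<m'$ contributes a smaller-order term $b_n^{-(m-a)}$. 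As $\binom{b_n}{m}$ dominates the normalizing sum, $\PP{Y_n=m}\to 1$ and $\PP{Y_n=m'}\sim (m!/m'!)\,b_n^{-(m-m')}$. Since $Y_n=m$ with overwhelming probability and the leading deviation is to $Y_n=m'$, one finds
\begin{equation}
\sigma_n^2 \ \sim \ (m-m')^2\,\frac{m!}{m'!}\,b_n^{-(m-m')}, \qquad \E{|Y_n-\mu_n|^3} \ \sim \ (m-m')^3\,\frac{m!}{m'!}\,b_n^{-(m-m')},
\end{equation}
the correction from $\mu_n\ne m$ being of negligible order $b_n^{-2(m-m')}$. The qualitative point is that $\sigma_n^2$ and the third absolute central moment are of the \emph{same} order $b_n^{-(m-m')}$, a consequence of $Y_n$ being bounded by $m$.

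Set $s_N^2 := \sum_{n=1}^N \sigma_n^2$. The hypothesis $b_n = o(n^{1/(m-m')})$ gives $b_n^{m-m'} = o(n)$, hence $b_n^{-(m-m')} \gg n^{-1}$ eventually, so comparison with the harmonic series yields $s_N^2 \asymp \sum_{n=1}^N b_n^{-(m-m')} \to \infty$. Combining this with the moment estimates above,
\begin{equation}
\frac{1}{s_N^{3}}\sum_{n=1}^N \E{|Y_n-\mu_n|^3} \ \asymp \ \frac{s_N^2}{s_N^3} \ = \ \frac{1}{s_N} \ \longrightarrow \ 0,
\end{equation}
which is precisely Lyapunov's condition with $\delta=1$; Theorem \ref{lyapunovCLT} then gives that $\bigl(Y_1+\cdots+Y_N-\sum_{n=1}^N\mu_n\bigr)/s_N$ converges to a standard Gaussian. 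I expect the main obstacle to be the second step: making the binomial-ratio asymptotics rigorous and, crucially, showing that $\sum_n b_n^{-(m-m')}$ diverges under exactly the stated growth condition. This divergence of $s_N^2$ is what the hypothesis $b_n=o(n^{1/(m-m')})$ is engineered to guarantee, and without it the Lyapunov ratio need not vanish; the third-moment bound, by contrast, is routine once the boundedness of $Y_n$ is exploited.
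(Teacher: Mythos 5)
Your proposal is correct and follows essentially the same route as the paper: identify the law $\PP{Y_n=j}=\binom{b_n}{j}/\sum_{i\in A}\binom{b_n}{i}$, show that the variance and the $(2+\delta)$-th absolute central moment are both of order $b_n^{-(m-m')}$, deduce $s_N^2\to\infty$ from the growth hypothesis, and invoke Lyapunov. The only differences are cosmetic refinements on your part — you fix $\delta=1$ and extract explicit constants via the binomial ratio, and you spell out the harmonic-series comparison that the paper merely asserts — whereas the paper works with exact double-sum formulas and generic $\delta$ before passing to $\Theta$-asymptotics.
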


\begin{theorem} \label{3.thm_bnAn}
Consider a $(\{b_n\}, \{A_n\}, 0)$-Sequence, where for all $n\in \N$, $b_n = n$, and  $A_n \in \{ \{0,\dots,n-1\},\{0,\dots,n\} \}$.
Let $\{Y_n\}$ be the sequence of independent random variables representing the number of summands chosen from each bin.
For any integer choice of $\delta >0$, the distribution of the number of summands satisfies the Lyapunov Central Limit Theorem, and thus converges to a Gaussian distribution as $N \to \infty$.
\end{theorem}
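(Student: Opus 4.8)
The plan is to apply Lyapunov's Central Limit Theorem (Theorem~\ref{lyapunovCLT}) to the independent summands $Y_1, Y_2, \dots$, so the first step is to pin down the law of each $Y_n$. Since $b_n = n$ and each $A_n$ is either $\{0,\dots,n-1\}$ or $\{0,\dots,n\}$, Theorem~\ref{thm:uniquenessnoadjcondition} guarantees uniqueness of decomposition, so the $Y_n$ are well defined. Because the adjacency parameter is $a=0$, the choices made in distinct bins are independent, and the number of legal ways to select exactly $k$ elements from the $n$th bin is $\binom{n}{k}$. Hence when $A_n = \{0,\dots,n\}$ the variable $Y_n$ is exactly $\mathrm{Binomial}(n,1/2)$, while when $A_n = \{0,\dots,n-1\}$ it is $\mathrm{Binomial}(n,1/2)$ conditioned to avoid the single atom $k=n$, which carries the exponentially small mass $2^{-n}$. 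For the top bin $N$ there is the additional conditioning $Y_N \ge 1$, which removes the empty choice (mass $\le 2^{-N}$).

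Next I would record the first two moments. In every case the truncation alters the binomial mean and variance by only exponentially small amounts, so $\mu_n := \E{Y_n} \sim n/2$ and $\sigma_n^2 := \V{Y_n} \sim n/4$; in particular the variances do not degenerate. Writing $s_N^2 := \sum_{n=1}^N \sigma_n^2$, a direct summation gives $s_N^2 \sim \tfrac18 N^2$, and therefore $s_N^{2+\delta} \asymp N^{2+\delta}$.

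The heart of the argument is the Lyapunov condition for the given integer $\delta>0$, namely that
\begin{equation}
\frac{1}{s_N^{2+\delta}}\sum_{n=1}^N \E{\,\abs{Y_n - \mu_n}^{2+\delta}\,}\ \longrightarrow\ 0 \qquad (N\to\infty).
\end{equation}
Here I would use that, up to an exponentially small correction, $Y_n$ is a sum of $n$ independent, uniformly bounded, mean-zero $\mathrm{Bernoulli}(1/2)$ terms, so a Marcinkiewicz--Zygmund (or Rosenthal) bound---or, since $\delta$ is an integer, a direct evaluation of the binomial central moments via the moment generating function---yields $\E{\abs{Y_n - \mu_n}^{2+\delta}} = O(n^{1+\delta/2})$. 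Summing produces $\sum_{n=1}^N O(n^{1+\delta/2}) = O(N^{2+\delta/2})$, whence the Lyapunov ratio is $O\!\big(N^{2+\delta/2}/N^{2+\delta}\big) = O(N^{-\delta/2}) \to 0$. Lyapunov's theorem then gives convergence of $\big(\sum_{n\le N} Y_n - \sum_{n\le N}\mu_n\big)/s_N$ to a standard Gaussian, which is the claim.

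I expect the main obstacle to be the clean, uniform control of the $(2+\delta)$th absolute central moments: establishing the exponent $1+\delta/2$ in $\E{\abs{Y_n-\mu_n}^{2+\delta}}=O(n^{1+\delta/2})$ is precisely what drives the whole estimate, and one must verify that the two different truncations (the missing top atom when $A_n=\{0,\dots,n-1\}$, and the conditioning $Y_N\ge 1$ on the leading bin) perturb the moments by only exponentially small amounts, so that the binomial estimates survive \emph{uniformly} over any admissible choice of the sequence $\{A_n\}$. Everything after that reduces to power counting in $N$.
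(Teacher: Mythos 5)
Your proposal is correct, and it reaches the Lyapunov condition by a genuinely different technical route than the paper. The overall skeleton is the same: both arguments identify $Y_n$ (for $A_n=\{0,\dots,n\}$) as $\mathrm{Binomial}(n,1/2)$, compute $\sigma_n^2 = n/4$, hence $s_N^2 \asymp N^2$, and then verify Lyapunov's criterion. The difference is in how the $(2+\delta)$-th absolute central moment $\rho_n^{2+\delta} = 2^{-(n+\delta+2)}\sum_{i=0}^n \binom{n}{i}\abs{2i-n}^{2+\delta}$ is bounded. The paper proceeds combinatorially: it assumes $\delta$ is \emph{even}, expands $(n-2i)^{2+\delta}$ by the binomial theorem, evaluates $\sum_i \binom{n}{i} i^j$ via a tuple-and-subset counting bijection, and shows that the coefficients of $n^{2+\delta}2^n$ and $n^{1+\delta}2^n$ vanish (using $\sum_j (-1)^j\binom{2+\delta}{j}=0$ and $\frac{\mathrm{d}^2}{\mathrm{d}x^2}(1-x)^{2+\delta}\big|_{x=1}=0$), concluding $\rho_n^{2+\delta}=O(n^\delta)$ and a Lyapunov ratio of $O(1/N)$; it treats only $A_n=\{0,\dots,n\}$ and asserts the truncated case is similar. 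You instead exploit the representation of $Y_n$ as a sum of $n$ independent centered Bernoulli variables and invoke Rosenthal/Marcinkiewicz--Zygmund (or direct binomial moment asymptotics), obtaining the sharp bound $\mathbb{E}\left[\abs{Y_n-\mu_n}^{2+\delta}\right]=O\left(n^{1+\delta/2}\right)$ and a ratio of $O\left(N^{-\delta/2}\right)$. Your route buys three things the paper's does not: it works for every real $\delta>0$ (in particular odd integers, so it actually covers the theorem's ``any integer $\delta>0$'' claim, which the paper's even-$\delta$ computation strictly does not); it handles both admissible choices of $A_n$ uniformly by showing the missing top atom perturbs moments only exponentially; and it accounts for the conditioning $Y_N\ge 1$ on the leading bin, which the paper silently ignores. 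What the paper's approach buys in exchange is self-containedness: it needs no probabilistic inequalities beyond elementary counting and cancellation identities. One small caveat on your write-up: for odd $2+\delta$ the ``direct evaluation via the moment generating function'' does not literally produce absolute moments, so in that case you should lean on Rosenthal (or interpolate odd absolute moments between adjacent even ones via Cauchy--Schwarz); since you name these tools explicitly, this is a presentational point rather than a gap.
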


We conclude in Section $4$ with a discussion of related lines for future research.

\section{Uniqueness of Decomposition with no Adjacency Condition}\label{unique_decomp}



We consider an arbitrary $(\{b_n\}, \{A_n\}, 0)$-Sequence; as $a=0$ there is no adjacency restriction.
We categorize what choices of the sequence $A_n$ give uniqueness of decomposition for the resulting generalized Zeckendorf decompositions. We usually require that $0$ and $1$ are in each $A_n$,
i.e, $\{0,1\}\subset A_n$, to ensure that our original construction creates a sequence where every positive integer has a decomposition.%
\footnote{If $A_n$ does not contain 0, then any decomposition must include an element of bin $n$, which forces the sum of a decomposition to be at least that of the minimal element of $A_n$, destroying our hopes of having either uniqueness or a decomposition for every positive integer. Note that if $A_n$ does not contain 1, zeroes can be added to bin $n$ so that way are able to pick any one particular element, though at the cost of uniqueness. For example, if we want to use just one element of bin $n$, and $A_n = \{k, k+1, \dots, b\}$, then we can place $k-1$ zeros in $b_n$. }
In Section \ref{g-nary}, we consider a scenario where $A_n = \{0,2\}$, but we do not require our sequence to generate the positive integers.

To understand the proof of Theorem 1.1, we use the following intuition. In our construction of a generalized Zeckendorf sequence, we ensure that each integer is generated by the construction ``in order'', that is, if we look at the first $k$ terms of our $(\{b_n\}, \{A_n\}, 0)$-Sequence, we will see that a consecutive block of positive integers is uniquely decomposable using these terms. When we allow $A_n$ to violate the conditions of Theorem 1.1, the first $k$ terms of our sequence no longer generate a consecutive block; the decomposable integers form multiple disconnected blocks. The block containing $1$ continues to grow as we add terms to our sequence and eventually meets another block, causing a failure of uniqueness of decomposition for some integer.

\begin{lemma} \label{interval generation}
Fix a $(b_n, A_n, 0)-$Sequence, and an integer $n_0 \geq 2$. Suppose that the set of integers generated by the first $n_0-1$ bins is the set $\left\{1,\ldots,k\right\}$. Then all future terms of our sequence are divisible by $k+1$.
\end{lemma}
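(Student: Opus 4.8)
The plan is to prove the statement by induction on the terms constructed after the first $n_0-1$ bins, maintaining as an invariant that every element of bin $n_0$ and of all later bins is divisible by $k+1$. The structural fact that makes everything work is that the adjacency number is $0$, so the choices of summands from different bins are completely independent: any legal decomposition is built by selecting an allowable number of elements from each bin separately. Its value therefore splits as $r + h$, where $r$ is the contribution of bins $1,\dots,n_0-1$ and $h$ is the contribution of bins $n_0, n_0+1, \dots$. By hypothesis the first set of contributions is exactly $r \in \{0,1,\dots,k\}$ (the value $0$ coming from the empty selection), and this stays fixed from now on since those bins are already complete.

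First I would settle the base case. Since bins $1,\dots,n_0-1$ generate precisely $\{1,\dots,k\}$, the least integer not yet obtainable is $k+1$, and by the construction rule this is the first element of bin $n_0$; it is plainly divisible by $k+1$.

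For the inductive step, assume every element constructed so far in bins $n_0$ and beyond is a multiple of $k+1$. Then any higher contribution $h$, being a sum of such elements, satisfies $h = (k+1)m$ for some $m \in \mathbb{Z}_{\ge 0}$, and $h = 0$ is always available because $0 \in A_j$ for every $j$. Writing $M \subseteq \mathbb{Z}_{\ge 0}$ for the set of attainable values of $m$ (so $0 \in M$), independence gives that the set of generated integers equals $\{0,1,\dots,k\} + \{(k+1)m : m \in M\}$. Each summand block $\{(k+1)m,\dots,(k+1)m+k\}$ is one complete window of residues modulo $k+1$ anchored at a multiple of $k+1$, so the representation $x = (k+1)\lfloor x/(k+1)\rfloor + r$ with $0 \le r \le k$ shows that $x$ is generated if and only if $\lfloor x/(k+1)\rfloor \in M$. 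Hence the least ungenerated integer is $(k+1)m^\ast$, where $m^\ast$ is the smallest non-negative integer absent from $M$, which is again divisible by $k+1$. The next constructed term equals this least ungenerated integer, so the invariant is preserved and the induction runs.

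The step I expect to require the most care is the bookkeeping for bins that are only partially filled. While bin $n_0$ (or any later bin) is still being populated, a legal decomposition may only use the elements already present, so I must confirm that this neither enlarges the lower contribution set beyond $\{0,\dots,k\}$ nor disturbs the alignment of the higher blocks. Both are automatic: bins $1,\dots,n_0-1$ are complete, so their contribution set is permanently $\{0,\dots,k\}$, and every higher element available at any intermediate stage is a multiple of $k+1$ by the inductive hypothesis, so the block structure—and with it the conclusion that the next term is a multiple of $k+1$—holds at every step.
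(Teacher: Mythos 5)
Your proof is correct and takes essentially the same approach as the paper's: both exploit the absence of an adjacency condition to split any generated integer into a lower contribution in $\{0,\dots,k\}$ plus a higher contribution that is inductively a multiple of $k+1$, so that the generated set consists of complete blocks $\{(k+1)m,\dots,(k+1)m+k\}$ and the least ungenerated integer (hence the next term) is always a multiple of $k+1$. Your write-up simply makes the induction invariant and the block structure explicit, where the paper compresses the same argument into a short paragraph.
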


\begin{proof}
Note that the first term in bin $n_0$ must be $k+1$. The terms in the first $n_0-1$ bins can form any sum from $1$ to $k$, and thus as we have no adjacency conditions, if we can represent a number $x$ using numbers from bin $n_0$ and on, we can also obtain $x+1, x+2, \dots, x+k$. Thus once we add a multiple $\alpha(k+1)$ of $k+1$, there is no need to add $\alpha(k+1) + \beta$ for any $\beta \in \{1, \dots, k\}$, and therefore the next possible term in our sequence is $(\alpha+1)(k+1)$. Continuing we see that all the numbers added are multiples of $k+1$, proving the claim.
\end{proof}


For example, consider the sequence with $b_n=n+1,\ A_n=\{0,1\}$:
\begin{equation} \underbracket{\ 1,\ 2\ }_{{\rm Bin\ 1}}\ ,\ \underbracket{\ 3,\ 6,\ 9\ }_{{\rm Bin \ 2}}\ ,\ \underbracket{\ 12,\ 24,\ 36,\ 48\ }_{{\rm Bin \ 3}}\ ,\ \underbracket{\  60,\ 120,\  \ldots}_{{\rm Bin \ 4}}\ ,\ \ldots.\end{equation}

Letting $n_0=2$ we find $k=11$ (i.e., the first two bins allow us to obtain precisely the integers from 1 to 11), and see that any legal combination of terms outside the first two bins is a multiple of $12$. \\

\begin{proof}[Proof of Theorem \ref{thm:uniquenessnoadjcondition}]
We want to show that a $(\{b_n\}, \{A_n\}, 0)$-Sequence has uniqueness of decomposition if and only if all $A_n$ are in the form of $\left\{0,1\right\},\ \left\{0,1,\ldots,b_n-1\right\} \text{ or } \left\{0,1,\ldots,b_n\right\}$.\\

To reduce the cases that we need to discuss, we assume that the first $n_0 - 1$ bins have $A_n$'s that satisfy the condition and the set of legal sums from these bins form the interval $\left\{1, \ldots k \right\}$, where each element has unique decomposition. Then by Lemma \ref{interval generation}, we have that all following terms of the sequence are divisible by $k+1$. Therefore, we can take the subsequence of our original sequence starting from the $n_0$\textsuperscript{th} bin to be our new sequence, and divide every term by $k+1$. For notational convenience we denote $A_{n_0}, b_{n_0}$ of the original sequence as $A_1, b_1$ of the new sequence which we now analyze.\\

We first show that if $A_1$ satisfies one of the conditions for which we claim uniqueness holds, then it yields intervals of integers, so by induction the first $n$ bins of the sequence always yield an interval for any $n\in \mathbb{N}$. Since every element of this interval has unique decomposition, we can prove the backwards direction of Theorem \ref{thm:uniquenessnoadjcondition}. Next we consider the case where the new sequence has $A_1$ outside of our stated set. We are then able to show that uniqueness fails in such sequences, so only the options stated in Theorem \ref{thm:uniquenessnoadjcondition} give uniqueness, therefore proving the forwards direction of the theorem.\\

We now consider each case for uniqueness.\\

\noindent \textbf{Case I:} $\mathbf{A_1 = \left\{0,1\right\}}$.
Fix $b_1$ and let $A_1 = \left\{0,1\right\}$. Then the first $b_1$ terms of our sequence are $1, 2, \ldots, b_1$. The integers generated by this bin form the set $S = \{ 1, 2, \ldots, b_1 \}$, which is an interval of integers. Since $A_1 = \{0,1\}$, and each element of $S$ must be written as a sum of elements in $b_1$, we clearly have unique decomposition.

\ \\

\noindent \textbf{Case II:} $\mathbf{A_1 = \left\{0,1,\ldots,b_1\right\}}$.
Fix $b_1$ and let $A_1 = \left\{0,1,\ldots,b_1\right\}$. Then the first $b_1$ terms of our sequence are $1,2,4,\ldots,2^{b_1-1}$. The integers generated by this bin form the set $S = \{ 1,2,\ldots,2^{b_1}-1 \}$, which is an interval of integers. Because binary decomposition of the integers is unique, we have unique decomposition.

\ \\

\noindent \textbf{Case III:} $\mathbf{A_1 = \left\{0,1,\ldots,b_1-1\right\}}$.
Fix $b_1$ and let $A_1 = \left\{0,1,\ldots,b_1-1\right\}$. Then the first $b_1$ terms of our sequence are $1,2,4,\ldots,2^{b_1-1}$. The integers generated by this bin form the set $S = \{ 1,2,\ldots,2^{b_1}-2 \}$, which is an interval of integers. We also note that this choice of $A_1$ gives unique decomposition, for the same reason as Case II.

\ \\

We have now explicitly analyzed the cases we claim give uniqueness and have shown that they yield intervals of integers. We are thus able to reduce to the cases where $A_1$ is not in the given set. We split non-uniqueness of these other choices of $A_1$ into several cases.

\ \\
\noindent \textbf{Case I:} $\mathbf{\left\{0,1,\ldots,k\right\}\subset A_1,\ {\rm with}\ k+1\notin A_1\ {\rm and}\ 2\leq k \leq b_1-2}$. 0
Because we have full freedom with the first $k$ elements of $b_1$, we have $1,2,4,\ldots,2^k$ as the first $k+1$ elements of this bin. Arguing as before, we also have that the $(k+2)$\textsuperscript{{\rm nd}} element of our bin must be $2^{k+1}-1$. We must use this term to form larger integers, so we are left with only $k-1$ terms to work with, meaning we can form all integers up to but not including $2^{k+1}-1+2^k-2+1 = 2^{k+1}+2^k-2$. Thus, this is the $(k+3)$\textsuperscript{{\rm rd}} element of our sequence (it will not matter whether this is in the first or second bin). We note that we can decompose $2^{k+1}+2^k-1$ as
\begin{align}
\left(2^{k+1}-1\right)+2^k\ =\ 2^{k+1}+2^k-1\ =\ \left(2^{k+1}+2^k-2\right)+1,
\end{align}
so uniqueness fails.

\ \\

\noindent \textbf{Case II:} $\mathbf{\left\{0,1\right\} \subsetneq A_1,\ 2\notin A_1}$. Pick $k := \inf \{x \in A_1 : x >1 \}$. This is the case where there is a gap in $A_1$. Since we are only allowed to choose $0, 1$ or at least $k$ elements from a bin, the first $k$ terms of the sequence are going to be $1,\ldots,k$. Since $k\geq 3$, $\sum_{m = 1}^k m = \frac{k(k+1)}{2} > k+2$, so the $(k+1)^{\mathrm{st}}$ and the $(k+2)^{\mathrm{nd}}$ terms are $k+1$ and $k+2$, respectively.\\
If we have $b_1\geq k+2$ for the first bin, then
\begin{align}
\begin{cases}
\displaystyle\sum_{m = 1}^{k/2}m+\sum_{m = k/2+3}^{k+2}m\ =\ \sum_{m = 2}^{k+1}m\ =\ \frac{k(k+3)}{2}\ &\text{when }k\text{ is even}\\
\displaystyle\sum_{m = 1}^{(k-1)/2}m+\sum_{m = (k+5)/2}^{k+2}m\ =\ \sum_{m = 2}^k m+(k+2)\ =\ \frac{(k-1)(k+2)}{2}+k+2\ &\text{when }k\text{ is odd},
\end{cases}
\end{align}
and we lose uniqueness of decomposition. Therefore, we only need to consider the cases where $b_1 = k$ or $k+1$. As the two follow similarly, we only provide the details for the first.

\ \\

\textbf{Subcase (i): }$\mathbf{b_1 = k}$. As $b_1 = k$, the sum of terms from the first bin is $\frac{k(k+1)}{2}$. As argued before, all multiples of $k+1$ less than this sum, including $\frac{k-1}{2}(k+1)$, can be expressed as a legal sum of terms not in the first bin. Therefore, when $k$ is odd,
\begin{align}
\frac{k(k+1)}{2}\ =\ \frac{k-1}{2}(k+1)+\frac{k+1}{2},
\end{align} where $\frac{k+1}{2}$ is a term in the first bin. We lose uniqueness of decomposition. When $k$ is even, $\frac{k(k+1)}{2}$ is not in the sequence and the next term is $\frac{k(k+1)}{2}+1$. Then we can decompose $(k+1)+\frac{k(k+1)}{2}$ two ways:
\begin{align}
(k+1)+\frac{k(k+1)}{2}\ =\ \left(\frac{k(k+1)}{2}+1\right)+k,
\end{align} where $k+1$ and $k$ are terms of the sequence. We lose uniqueness of decomposition.

\ \\

\textbf{Subcase (ii): }$\mathbf{b_1 = k+1}$. A similar argument holds on losing uniqueness of decomposition.
\end{proof}

\section{Gaussianity of Number of Summands: $a=0$}


Now that we have exactly determined the decomposition rules which yield sequences giving rise to unique decomposition of integers in the $a=0$ case, we investigate the Gaussianity of the distribution of the average number of summands in these decompositions. The following result (see \cite{Bi}) is a key ingredient in several proofs in this section.

\begin{theorem}[Lyapunov Central Limit Theorem]\label{lyapunovCLT}
Let $\{Y_1, Y_2, \dots\}$ be a sequence of independent random variables, each with finite expected value $\mu_i$ and variance $\sigma_{i}^2$. Let $s_n^2 := \sum_{i=1}^{n} \sigma_{i}^2$. If there exists a $\delta>0$ such that $\lim_{n \to \infty} \frac{1}{s_{n}^{2+\delta}}\sum_{i=1}^{n}\mathbb{E}[|Y_{i}-\mu_{i}|^{2+\delta}] = 0$, then $\frac{1}{s_n} \sum_{i=1}^{n} (Y_i - \mu_i)$ converges in distribution to the standard normal as $n \to \infty$.
\end{theorem}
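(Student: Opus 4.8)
The plan is to recognize this as the classical Lyapunov Central Limit Theorem and to derive it from the Lindeberg Central Limit Theorem, whose proof I would then sketch via characteristic functions. First I would pass to the normalized triangular array $X_{n,i} := (Y_i - \mu_i)/s_n$ for $1 \le i \le n$, so that the $X_{n,i}$ are independent, have mean zero, and satisfy $\sum_{i=1}^n \mathbb{E}[X_{n,i}^2] = 1$; the target is to show that $S_n := \sum_{i=1}^n X_{n,i}$ converges in distribution to the standard normal.

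The key reduction is that the Lyapunov hypothesis implies the Lindeberg condition, namely that for every $\varepsilon > 0$,
\[
L_n(\varepsilon) := \frac{1}{s_n^2} \sum_{i=1}^n \mathbb{E}\!\left[(Y_i - \mu_i)^2 \, \mathbf{1}_{\{|Y_i - \mu_i| > \varepsilon s_n\}}\right] \longrightarrow 0.
\]
This follows from the elementary pointwise bound that on the event $\{|Y_i - \mu_i| > \varepsilon s_n\}$ one has $(Y_i - \mu_i)^2 \le |Y_i - \mu_i|^{2+\delta} / (\varepsilon s_n)^\delta$, whence $L_n(\varepsilon) \le \varepsilon^{-\delta} s_n^{-(2+\delta)} \sum_{i=1}^n \mathbb{E}[|Y_i - \mu_i|^{2+\delta}]$, and the right-hand side tends to $0$ by assumption. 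A useful consequence I would record is the uniform asymptotic negligibility $\max_{i \le n} \sigma_i^2 / s_n^2 \to 0$, which the Lindeberg condition also yields.

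The analytic heart of the argument is then the Lindeberg CLT, which I would prove by a characteristic-function swapping (Lindeberg replacement) scheme. Writing $\phi_{n,i}(t) = \mathbb{E}[e^{itX_{n,i}}]$ and letting $\psi_{n,i}(t) = \exp(-t^2 \sigma_i^2 / (2 s_n^2))$ be the characteristic function of a Gaussian with the same variance, we have $\prod_{i=1}^n \psi_{n,i}(t) = e^{-t^2/2}$ exactly, so it suffices to show $\big|\prod_i \phi_{n,i}(t) - \prod_i \psi_{n,i}(t)\big| \to 0$ for each fixed $t$. Using the telescoping bound $|\prod a_i - \prod b_i| \le \sum_i |a_i - b_i|$, valid whenever all $|a_i|, |b_i| \le 1$, I would reduce to estimating $\sum_i |\phi_{n,i}(t) - \psi_{n,i}(t)|$. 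Each summand is controlled by the Taylor estimates $|e^{ix} - 1 - ix + \tfrac{x^2}{2}| \le \min(|x|^3/6,\, x^2)$ and the analogous second-order expansion of $\psi_{n,i}$; splitting the expectation defining $\phi_{n,i}$ over $\{|X_{n,i}| \le \varepsilon\}$ and its complement, the small-value part contributes a term of size $O(\varepsilon t^3 \sum_i \sigma_i^2/s_n^2) = O(\varepsilon t^3)$ and the large-value part is bounded by $O(t^2 L_n(\varepsilon))$. Letting $n \to \infty$ and then $\varepsilon \to 0$ drives the total to zero, and Lévy's continuity theorem upgrades this pointwise convergence of characteristic functions to convergence in distribution.

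I expect the main obstacle to be the replacement step: keeping the Taylor remainders uniform across the array and splitting them correctly so that precisely the Lindeberg sum $L_n(\varepsilon)$, rather than some uncontrolled quantity, governs the error. The bookkeeping of the two regimes $|X_{n,i}| \le \varepsilon$ versus $|X_{n,i}| > \varepsilon$, together with verifying that the product-difference inequality applies because $|\phi_{n,i}| \le 1$ and $|\psi_{n,i}| \le 1$, is where the care is needed; the remaining estimates are routine.
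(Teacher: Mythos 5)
The paper does not actually prove this statement: it is quoted as a known classical result and cited to Billingsley \cite{Bi}, which is precisely where your argument lives. Your proposal is correct and follows essentially that standard route --- the pointwise bound $(Y_i-\mu_i)^2 \le |Y_i-\mu_i|^{2+\delta}/(\varepsilon s_n)^{\delta}$ on the event $\{|Y_i-\mu_i| > \varepsilon s_n\}$ reduces Lyapunov's condition to Lindeberg's, and the Lindeberg CLT is then established by the characteristic-function replacement scheme (with the uniform negligibility $\max_{i\le n}\sigma_i^2/s_n^2 \to 0$ you correctly record as the ingredient needed to compare $\psi_{n,i}$ with its second-order expansion) --- so there is nothing to correct.
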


We use the following standard notation below. We write $f(x) = \Theta(g(x))$ if there exist positive constants $C_1, C_2$ such that for all $x$ sufficiently large we have
\begin{equation}
0 \ < \ C_1 g(x) \ \le \ f(x) \ \le \ C_2 g(x).
\end{equation}

\subsection{At most one summand per bin} \label{proof_01A_n}
We begin by proving Theorem \ref{01A_n}, which concerns sequences with variable bin sizes, $A_n=\{0,1\}$, and no adjacency condition.

\begin{proof}[Proof of Theorem \ref{01A_n}]
For $n<N$, we have $b_n+1$ options for the $n$\textsuperscript{th} bin: we have no element or exactly one of the $b_n$ terms. Each of these choices is equally likely, and thus $P(Y_n = 0)=\frac{1}{b_{n}+1}$ and $P(Y_{n}=1) = \frac{b_n}{b_n +1}$. Therefore the expected value for $Y_n$ (and $Y_n^2$ as $Y_n = Y_n^2)$ is
\begin{equation}
\mu_n\ :=\ \mathbb{E}[Y_{n}]\ =\ \frac{b_n}{b_{n}+1} \ = \ \mathbb{E}[Y_n^2],
\end{equation}
and its variance is
\begin{align}
\sigma_{n}^2 \ :=\ \mathbb{E}[Y_{n}^2] -\mathbb{E}[Y_n]^2\ =\  \frac{b_n}{b_n +1}-\left(\frac{b_n}{b_n +1}\right)^2\ =\ \frac{b_n}{(b_n +1)^2}.
\end{align}
Let $s_N^2 := \sum_{n=1}^{N-1} \sigma_{n}^2 = \sum_{n=1}^{N-1} b_n/(b_n +1)^2$. We now apply the Lyapunov Central Limit Theorem. Note
\begin{align}
\mathbb{E}[|Y_n-\mu_n|^{2+\delta}] & \ = \  \frac{b_n}{b_n +1}\left(\frac{1}{b_n +1}\right)^{2+\delta} + \frac{1}{b_n +1}\left(\frac{b_n}{b_n +1}\right)^{2+\delta} \nonumber\\
& \ = \  \frac{b_n}{(b_n +1)^2}\frac{1+b_n^{1+\delta}}{(b_n +1)^{1+\delta}}\ <\ \frac{b_n}{(b_n +1)^2}.
\end{align}
Define $\rho_n^{2+\delta} := \E{|Y_n-\mu_n|^{2+\delta}}$ and $e_N := \sum_{n=1}^{N}\rho_n^{2+\delta}$. Then
\begin{align}
e_N \ =\ \sum_{n=1}^{N-1}\mathbb{E}[|Y_n-\mu_n|^{2+\delta}]\ <\ \sum_{n=1}^{N-1} \frac{b_n}{(b_n +1)^2}\ =\ s_N^2.
\end{align}
We note that $\sigma_n^2$ is asymptotically similar to $1/b_n$ (i.e., $1/b_n \ll \sigma_n \ll 1/b_n)$, so $\left\{s_N^2\right\}$ converges if and only if $\sum_{n=1}^N 1/b_n$ converges.

Suppose $\sum_{n=1}^N 1/b_n$ diverges. Then $s_N^2$ diverges, and for all $\delta>0$
\begin{align}
\lim_{N \to \infty} \left(\frac{e_N}{s_N^{2+\delta}}\right)^2 \ <\ \lim_{N \to \infty} \frac{(s_N^2)^2}{(s_N^2)^{2+\delta}}\ =\ \lim_{N \to \infty} \frac{1}{(s_N^2)^{\delta}}\ =\ 0
\end{align}
(the limit tends to zero as we are assuming the sum of the reciprocals of $b_n$ diverges, and thus $n_n$ must tend to infinity). Thus, the Lyapunov condition is satisfied,  and by Theorem \ref{lyapunovCLT} the distribution of number of summands, $\frac{1}{N}\sum_{i=1}^N Y_i$, converges to a Gaussian in the sense of Lyapunov.
\end{proof}

\begin{remark}
If $\sum_{i=1}^n 1/b_i$ converges, then the denominator of the Lyapunov limit converges to a finite limit. Furthermore, the numerator is nonzero, so the limit is nonzero. Thus, the Lyapunov condition fails if $\sum_{i=1}^n 1/b_i$ converges. While this does not prove that the distribution of the number of summands does not approach a Gaussian distribution, it provides some evidence against this behavior.
\end{remark}

\subsection{
Multiple summands per bin
} \label{proof_3.thm_bnA}



We now prove Theorem \ref{3.thm_bnA}.

\begin{proof}[Proof of Theorem \ref{3.thm_bnA}]
Assume $|A| \geq 2$.
We begin in a similar manner as Theorem \ref{varybinsize} by noting that the probability of choosing exactly $i$ summands from the $n$\textsuperscript{{\rm th}} bin is
\begin{align}
P(Y_n =i)\ =\ \frac{\binom{b_n}{i}}{\sum_{t\in A}\binom{b_n}{t}},
\end{align}
and the expectated values of $Y_n$ and $Y_n^2$ are
\begin{eqnarray}
\mathbb{E}[Y_n] \ =\  \frac{\sum_{t\in A}t\binom{b_n}{t}}{\sum_{t\in A}\binom{b_n}{t}}, \ \ \ \mathbb{E}[Y_n^2]  \ =\  \frac{\sum_{t\in A}t^2\binom{b_n}{t}}{\sum_{t\in A}\binom{b_n}{t}}.
\end{eqnarray}
Then
\begin{align}
\sigma_n^2& \ = \  \mathbb{E}[Y_n^2]-\mathbb{E}[Y_n]^2\nonumber\\
& \ = \  \frac{\left(\sum_{t\in A}t^2\binom{b_n}{t}\right)\left(\sum_{t\in A}\binom{b_n}{t}\right)}{\left(\sum_{t\in A}\binom{b_n}{t}\right)^2}-\frac{\left(\sum_{t\in A}t\binom{b_n}{t}\right)^2}{\left(\sum_{t\in A}\binom{b_n}{t}\right)^2}\nonumber\\
& \ = \  \frac{\sum_{i,j\in A}i^2\binom{b_n}{i}\binom{b_n}{j}-\sum_{i,j\in A}ij\binom{b_n}{i}\binom{b_n}{j}}{\left(\sum_{t\in A}\binom{b_n}{t}\right)^2}.
\end{align}
The terms where $i = j$ cancel, so we are left with
\begin{align}
\sigma_n^2 & \ = \
\frac{\sum_{i,j\in A, i\neq j}i^2\binom{b_n}{i}\binom{b_n}{j}-\sum_{i,j\in A, i \neq j}ij\binom{b_n}{i}\binom{b_n}{j}}{\left(\sum_{t\in A}\binom{b_n}{t}\right)^2}
\ = \
\frac{\sum_{i,j\in A,i\neq j}(i-j)^2\binom{b_n}{i}\binom{b_n}{j}}{2\left(\sum_{t\in A}\binom{b_n}{t}\right)^2}. \label{3.sigman2}
\end{align}
Define $\rho_n^{2+\delta} := \mathbb{E}\left[\left|Y_n-\mu_n\right|^{2+\delta}\right]$. We find that
\begin{align}
\rho_n^{2+\delta} & \ = \  \sum_{i\in A} \left|i-\frac{\sum_{t\in A}t\binom{b_n}{t}}{\sum_{t\in A}\binom{b_n}{t}}\right|^{2+\delta} \frac{\binom{b_n}{i}}{\sum_{t\in A}\binom{b_n}{t}}
\nonumber\\
& \ = \  \frac{\sum_{i\in A} \binom{b_n}{i} \left|i\sum_{t\in A} \binom{b_n}{t} -\sum_{t\in A} t\binom{b_n}{t}\right|^{2+\delta}}{\left(\sum_{t\in A} \binom{b_n}{t}\right)^{3+\delta}}
\nonumber\\
& \ = \  \frac{\sum_{i\in A} \binom{b_n}{i} \left|\sum_{t\in A} \left(i-t\right)\binom{b_n}{t} \right|^{2+\delta}}{\left(\sum_{t\in A} \binom{b_n}{t}\right)^{3+\delta}}. \label{3.rho_n}
\end{align}
We now find asymptotics for $\sigma_n^2$ and $\rho_n^{2+\delta}$. We first note that $\binom{b_n}{t} = \Theta(b_n^t)$; we do not need to have a $t$ subscript on the $\Theta$ relation as $t \le b$ and $b$ is fixed. Therefore
\begin{align}
\left(\sum_{t\in A}\binom{b_n}{t}\right)^2 \ =\ \left(\sum_{t\in A} \Theta\left(b_n^t\right)\right)^2\ =\ \Theta\left(b_n^{m}\right)^2\ =\ \Theta\left(b_n^{2m}\right).
\end{align}
We also note that
\begin{align}
\sum_{i,j\in A, i\neq j}(i-j)^2\binom{b_n}{i}\binom{b_n}{j}\ =\ \sum_{i,j\in A, i\neq j} \Theta\left(b_n^i b_n^j\right)\ =\ \Theta\left(b_n^{m+m^\prime}\right).
\end{align}
Therefore
\begin{align}
\sigma_n^2\ =\ \frac{\Theta(b_n^{m+m^{'}})}{\Theta(b_n^{2m})}\ =\ \Theta\left(\frac{1}{b_n^{m-m^{'}}}\right).
\end{align}
Similarly, for $\rho_n^{2+\delta}$ we have
\begin{align}
\rho_n^{2+\delta} \ = \  \frac{\Theta(b_n^{(2+\delta)m}b_n^{m'})}{\Theta(b_n^{(3+\delta)m})}\ =\ \Theta(b_n^{m'-m}).
\end{align}
Thus
\begin{align}
\Theta(\rho_n^{2+\delta})\ =\ \Theta(\sigma_n^2).
\end{align}
Now let $r_N^{2+\delta} := \sum_{n=1}^N \rho_n^{2+\delta}$, and $s_N^2 := \sum_{n=1}^N\sigma_n^2$. We consider the Lyapunov limit $\lim_{N \to \infty} r_N^{2+\delta}/s_N^{2+\delta}$. We have
\begin{align}
\lim_{N\to \infty}\left(\frac{r_N^{2+\delta}}{s_N^{2+\delta}}\right)^2\ =\ \lim_{N\to \infty}\frac{(r_N^{2+\delta})^2}{(s_N^2)^{2+\delta}} \ = \ \lim_{N\to \infty}\frac{\Theta(s_N^2)^2}{(s_N^2)^{2+\delta}}\  =\ \lim_{N\to \infty}\frac{1}{\Theta(s_N^2)^\delta}.
\end{align}
If the bin size $b_n$ grows slower than $n^{\frac{1}{m-m^{'}}}$, then $\sum_{n=1}^{N}\Theta\left(1/(b_n^{m-m^{'}})\right) \to \infty$, and thus $s_N \to \infty$. Thus, the above limit tends to $0$ and the Lyapunov condition is satisfied for any $\delta > 0$. So we conclude that the distribution of the number of summands converges to a Gaussian distribution as $N \to \infty$.
\end{proof}




We now prove Theorem \ref{3.thm_bnAn}.

\begin{proof}[Proof of Theorem \ref{3.thm_bnAn}]
We will prove the case $A_n = \{0,\dots, n\}$, as the case $A_n = \{0,\dots, n-1\}$ is similar.

Taking $b_n = n$ and $A_n = \{0,\dots, n\}$ in \eqref{3.sigman2} and \eqref{3.rho_n}, we have
\begin{align}
    \rho_n^{2+\delta} &\ = \ \frac{\sum_{i=0}^n \binom{n}{i} \abs{2i-n}^{2+\delta}}{2^{n+\delta+2}} \nonumber \\
    \sigma_n^2  &\ = \   \frac{n}{4}.
\end{align}

From the Lyapunov CLT limit, we seek to show
\begin{align}
\lim_{N \rightarrow \infty} \dfrac{\sum_{n \leq N} \rho_n^{2+\delta}}{\left ( \sum_{n \leq N} \sigma_n^2 \right )^{\frac{2+\delta}{2}} } &\ = \ 0, \ \ \ \text{or equivalently } \lim_{N \rightarrow \infty} \dfrac{\left ( \sum_{n \leq N} \sigma_n^2 \right )^{\frac{2+\delta}{2}} }{\sum_{n \leq N} \rho_n^{2+\delta}} \ = \ \infty. \label{3.2.lyapunov}
\end{align}

Substituting gives, for fixed $N$,
\begin{align}
\dfrac{\left ( \sum_{n \leq N} \sigma_n^2 \right )^{\frac{2+\delta}{2}} }{\sum_{n \leq N} \rho_n^{2+\delta}}  & \ = \
\frac{ c N^{2+\delta}}{\sum_{n\leq N} \dfrac{\sum_{i=0}^n \binom{n}{i} \abs{2i-n}^{2+\delta}}{2^{n+\delta+2}}},
\end{align}
for a constant $c > 0$.
It thus suffices to show that $\sum_{n\leq N} \dfrac{\sum_{i=0}^n \binom{n}{i} \abs{2i-n}^{2+\delta}}{2^{n+\delta+2}} \ = \ O \left ( N^{1+\delta} \right )$, for which it is enough to prove that
\begin{align}
    \frac{\sum_{i=0}^n \binom{n}{i} \abs{2i-n}^{2+\delta}}{2^{n+\delta+2}} \ = \ O \left ( n^{\delta} \right ). \label{abs_exp}
\end{align}
We assume $\delta>0$ is even in this proof for ease of computations. Then
\begin{align}
\sum_{i=0}^n \binom{n}{i} \abs{2i-n}^{2+\delta} &\ = \ \sum_{i=0}^n \binom{n}{i} (n-2i)^{2+\delta} \nonumber \\
    &\ = \ \sum_{i=0}^n \binom{n}{i} \sum_{j=0}^{2+\delta} \binom{2+\delta}{j} (-1)^j n^{2+\delta-j} (2i)^{j} \nonumber \\
    &\ = \ \sum_{j=0}^{2+\delta} (-1)^j \binom{2+\delta}{j} 2^{j} n^{2+\delta -j} \sum_{i=0}^n \binom{n}{i} i^{j}. \label{full_exp}
\end{align}
We wish to show that the $n^{2+\delta}2^n$ and $n^{1+\delta}2^n$ terms in \eqref{full_exp} go to zero, which correspond to the $n^j 2^{n-j}$ and $n^{j-1} 2^{n-j-1}$ terms in $\sum_{i=0}^n \binom{n}{i} i^j$.
We compute  $\sum_{i =0}^n \binom{n}{i} i^{j}$ by noting that the sum represents the number of ways to choose a subset $A \subset \{1, \dots, n\}$ along with an ordered $j$-tuple $(a_1, \dots, a_j)$, where each $a_k \in A$. Alternatively, we could pick our ordered j-tuple $(b_1, \dots, b_j)$ first, so that each $b_k \in \{ 1, \dots , n \}$, and then choose a subset $B \subset \{1, \dots, n \}$ that includes the distinct elements of $\{b_1, \dots, b_j\}$. It is easily checked that these two counting schemes are the same by showing that the set of possible $\left (A, (a_1, \dots, a_j) \right )$ is in bijection with the set of possible $\left ( (b_1,\dots, b_j), B \right )$. Following our second scheme, we note that if all elements in our $j$-tuple are distinct, then there are
\begin{align}
	n(n-1) \cdots (n-j+1) 2^{n-j} \label{all_dist}
\end{align}
ways to pick our tuple and subset. Similarly, if $j-1$ elements in the $j$-tuple are distinct, then we have
\begin{align}
    \binom{n}{j-1} \binom{j-1}{1} \frac{j!}{2} 2^{n-j+1} \ = \ n(n-1)\cdots (n-j+2) (j-1)j 2^{n-j} \label{all1_dist}
\end{align}
ways to choose.
In general, if $j-k$ elements in our tuple are distinct, then there are $O\left ( n^{j-k} 2^{n-j+k} \right )$ ways to choose our tuple and subset. Therefore, the expressions in \eqref{all_dist} and \eqref{all1_dist} make the only contributions to the $n^{j-1} 2^{n-j-1}$ term, while the only contribution to the $n^j 2^{n-j}$ term comes from \eqref{all_dist}. The coefficient of the $n^j 2^{n-j}$ term is simply $1$, and thus from \eqref{full_exp}, the coefficient of $n^{2+\delta}2^n$ is
\begin{align}
    \sum_{j=0}^{2+\delta} (-1)^j \binom{2+\delta}{j} = 0.
\end{align}
Now, the coefficient of the $n^{j-1}$ term from \eqref{all_dist} is $-\sum_{i=0}^{j-1}i  2^{n-j} =-(j-1)j2^{n-j-1}$. The coefficient of the $n^{j-1}$ term from \eqref{all1_dist} is $(j-1)j2^{n-j}$. We add these two expressions together to obtain  $j^2-j$ as the coefficient of $n^{j-1} 2^{n-j-1}$ in $\sum_{i=0}^n \binom{n}{i} i^j$. Again, from \eqref{full_exp}, the coefficient of $n^{1+\delta}2^n$ is
\begin{align}
\frac{1}{2} \sum_{j=0}^{2+\delta} (-1)^j (j^2-j) \binom{2+\delta}{j} .
\end{align}
Note that the above is equal to
\begin{align}
\left. \frac{\mathrm{d}^2}{\mathrm{d} x^2} (1-x)^{2+ \delta} \right|_{x=1} = 0
\end{align}
for $\delta >0$, and thus we are done.
\end{proof}

\begin{conjecture}
    The Lyapunov condition holds for any for any $A_n = \{0, 1, \dots, \floor{n/k}\}$. Numerics suggest this is true.
\end{conjecture}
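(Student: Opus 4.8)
The plan is to follow the model used in the proofs of Theorems \ref{3.thm_bnA} and \ref{3.thm_bnAn}: take $b_n = n$ and $A_n = \{0,1,\dots,M_n\}$ with $M_n := \floor{n/k}$, so that $Y_n$ is the truncated binomial with $\PP{Y_n = i} = \binom{n}{i}/\sum_{t=0}^{M_n}\binom{n}{t}$ for $0 \le i \le M_n$. The central idea is to reduce every quantity in the Lyapunov ratio to the centered moments of the single auxiliary variable $W_n := M_n - Y_n \ge 0$. Since $Y_n - \mu_n = \E{W_n} - W_n$, one has $\sigma_n^2 = \V{W_n}$ and $\rho_n^{2+\delta} = \E{\abs{W_n - \E{W_n}}^{2+\delta}}$. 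I would then establish the single structural fact that, after rescaling by some $h(n)$, the variable $W_n$ has a nondegenerate limiting law with convergent moments; granting this, $\sigma_n^2 = \Theta(h(n)^2)$, $\rho_n^{2+\delta} = \Theta(h(n)^{2+\delta})$, and in particular the comparison
\begin{equation}
\rho_n^{2+\delta} \ = \ \Theta\!\left((\sigma_n^2)^{1+\delta/2}\right) \label{conj:comp}
\end{equation}
holds, exactly as exploited in Theorem \ref{3.thm_bnA}.

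Once \eqref{conj:comp} holds together with $\sigma_n^2 = \Theta(n^{\alpha})$ for the relevant exponent $\alpha \ge 0$, the conclusion is automatic and, notably, uniform in $\alpha$: summing gives $\sum_{n\le N}\rho_n^{2+\delta} = \Theta(N^{1+\alpha(1+\delta/2)})$ and $\left(\sum_{n\le N}\sigma_n^2\right)^{(2+\delta)/2} = \Theta(N^{(1+\alpha)(1+\delta/2)})$, so that
\begin{equation}
\frac{\sum_{n\le N}\rho_n^{2+\delta}}{\left(\sum_{n\le N}\sigma_n^2\right)^{(2+\delta)/2}} \ = \ \Theta\!\left(N^{-\delta/2}\right) \ \longrightarrow \ 0
\end{equation}
for every $\delta > 0$, verifying the hypothesis of the Lyapunov Central Limit Theorem (Theorem \ref{lyapunovCLT}). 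The finitely many indices $n < k$, for which $Y_n \equiv 0$ and $\sigma_n^2 = 0$, do not affect these tail asymptotics. Thus the whole problem collapses to identifying $h(n)$ and justifying the limiting-moment claim, and here I expect two regimes.

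For $k \ge 3$ the cap $M_n = \floor{n/k}$ lies well below the mode $n/2$, and the ratio of successive weights is geometrically small,
\begin{equation}
\frac{\PP{Y_n = M_n - j}}{\PP{Y_n = M_n}} \ = \ \prod_{i=1}^{j}\frac{M_n-i+1}{n-M_n+i} \ \le \ \left(\frac{1}{k-1}\right)^{j},
\end{equation}
so $W_n$ has geometric tails with ratio $1/(k-1)\le 1/2$ uniformly in $n$, converges to a genuine geometric law, and has all moments bounded above and below by positive constants. This gives $h(n)=1$ (so $\alpha=0$), $\sigma_n^2 = \Theta(1)$, and $\rho_n^{2+\delta} = \Theta(1)$, and this regime is routine.

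The hard part will be $k = 2$, where $M_n = \floor{n/2}$ sits essentially at the mode, the geometric ratio above degenerates to $1$, and $Y_n$ is the lower half of a $\mathrm{Bin}(n,1/2)$ variable. Here I expect $h(n) = \sqrt{n}$ (so $\alpha = 1$), with $W_n/\sqrt{n}$ converging to a half-normal law, giving $\sigma_n^2 = \Theta(n)$ and $\rho_n^{2+\delta} = \Theta(n^{1+\delta/2})$. The obstacle is to upgrade this distributional convergence to convergence of the second and $(2+\delta)$th moments: this needs a local central limit theorem for the binomial together with a uniform-integrability estimate supplied by its subgaussian tails, plus some bookkeeping over the parity of $n$ (the truncation point $\floor{n/2}$ differs from the exact mode by $O(1)$ depending on parity). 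Securing these moment asymptotics is the only substantive step; \eqref{conj:comp} and the final Lyapunov estimate then follow as above, for every $k \ge 2$ and every $\delta > 0$.
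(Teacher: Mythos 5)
You should know first that the paper offers no proof to compare against: this statement is left as an open conjecture, supported only by numerics, so your attempt has to be judged on its own merits rather than against an existing argument. On those merits your strategy is sound, and for $k \ge 3$ it is essentially complete. Passing to $W_n = \floor{n/k} - Y_n$ is legitimate since centered absolute moments are invariant under $Y_n \mapsto M_n - Y_n$; the successive-ratio bound $(M_n - i + 1)/(n - M_n + i) \le 1/(k-1) \le 1/2$ does give geometric tails for $W_n$ uniformly in $n$; and together with a lower bound on $\PP{W_n = 0}$ and $\PP{W_n = 1}$ (which you should state explicitly, since it is what keeps $\sigma_n^2$ from degenerating) this yields $\sigma_n^2 = \Theta(1)$ and $\rho_n^{2+\delta} = \Theta(1)$. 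Your exponent bookkeeping at the end is also correct: whenever $\sigma_n^2 = \Theta(n^{\alpha})$ and $\rho_n^{2+\delta} = \Theta\bigl((\sigma_n^2)^{(2+\delta)/2}\bigr)$, the Lyapunov ratio is $\Theta(N^{-\delta/2})$ independently of $\alpha \ge 0$, which is exactly what Theorem \ref{lyapunovCLT} requires.

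The place where your write-up is a plan rather than a proof is the one you yourself flag: $k = 2$. There the claims $\sigma_n^2 = \Theta(n)$ and $\rho_n^{2+\delta} = \Theta(n^{1+\delta/2})$ amount to convergence of the second and $(2+\delta)$-th centered moments of the truncated binomial, rescaled by $\sqrt{n}$, to those of the half-normal $\abs{Z}/2$, whose variance $(1 - 2/\pi)/4$ is strictly positive. This is true and provable by exactly the tools you name: the truncation point $\floor{n/2}$ differs from the mean by $O(1)$, which washes out at scale $\sqrt{n}$, and uniform integrability of all powers follows from Hoeffding's bound together with $\PP{X \le \floor{n/2}} \ge 1/2$. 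But as written these moment asymptotics are asserted, not executed, and they are the entire analytic content of the hard case; those lines must be supplied before this counts as a proof. It is also worth noting that your route is structurally different from the paper's proof of the neighboring Theorem \ref{3.thm_bnAn}, which expands $\sum_{i} \binom{n}{i}\abs{2i-n}^{2+\delta}$ exactly and cancels the two top coefficients combinatorially; that computation has no evident analogue once the summation index is capped at $\floor{n/k}$, which is presumably why the authors stopped at a conjecture. Your softer limit-law argument sidesteps the combinatorics entirely and, once the $k=2$ moment step is written out, would settle the conjecture for all $k \ge 2$ and all $\delta > 0$.
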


\section{Future Directions (Higher Dimensional Sequences)}

\subsection{Zeckendorf Involution Tree}

It would be natural after studying bin decompositions to look at 2-dimensional sequences that have similar properties; the Fibonacci quilt \cite{CFHMN2,CFHMNPX} is one such generalization. We could ask many questions, such as: What types of sequence constructions yield unique decomposition of positive integers? How do statistics such as average number of summands change in the two-dimensional case? However, in many cases (including the Fibonacci quilt), seemingly two-dimensional sequences reduce to one-dimensional relations, such as conditions imposed on bins; see \cite{CCGJMSY} for an example that is fundamentally not one-dimensional. As an example, we construct a ``two-dimensional'' sequence of integers, which we call the Zeckendorf tree, as follows.

Let $a_{1,1}=1$. For a term $a_{i,j}$, $i \geq 1$, $1 \leq j \leq i$, $i$ corresponds to the level in the tree in which the term is located, and $j$ is the term's position within the level. The $i$\textsuperscript{{\rm th}} level has precisely $i$ terms. We add an integer to the tree if it is not the sum of terms from nonadjacent levels. As 2 is not the sum of terms of nonadjacent levels, we add it to the tree as the first term of the second level. Similarly, 3 is the second term of the second level. Next, 4 is the first term of the third level, but 5 can be represented as $4+1$, a sum of terms from nonadjacent levels. So 6 is the next term. We continue this process indefinitely to construct the Zeckendorf tree.

\begin{center}
\scalebox{.8}{\includegraphics{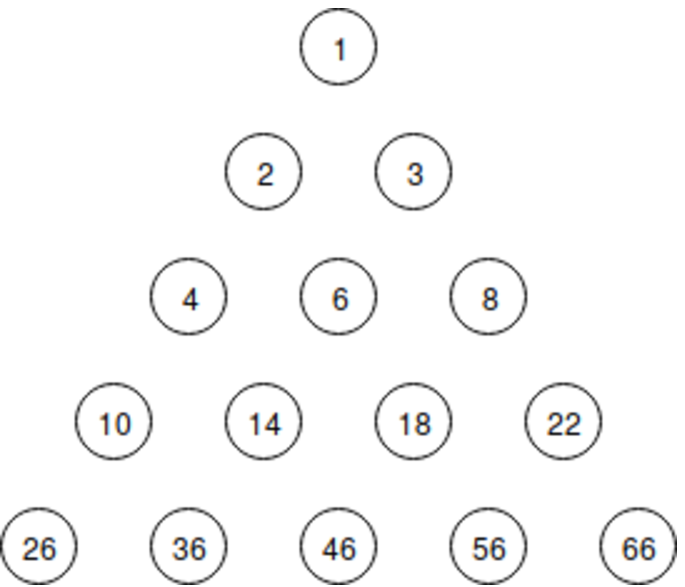}}
\end{center}

Interestingly, the left diagonal of the tree $1, 2, 4, 10, 26,...$ is the sequence of involutions on $i$ letters, also known as the Telephone Numbers. These diagonal terms are defined by the recurrence relation $a_1  = 1, a_2=2$, and $a_n = a_{n-1} + (n-1)a_{n-2}$ for $n>2$.

The recurrence relation for the terms of the tree is given by
\begin{equation}
a_{i,j} \ =\
    \begin{cases}
         a_{i, j-1}+a_{i-1, 0},         &j > 1\\
         a_{i-1, i-1} + a_{i, 0},     &j = 1.
    \end{cases}
\end{equation}

Using techniques similar to those of the proof of Zeckendorf's theorem, one can show that every positive integer $n$ can expressed uniquely as a sum of terms from nonadjacent levels of the Zeckendorf tree. However, while the recurrence relation for the terms of the tree seems to depend both on $i$ and $j$, the tree can be described one-dimensionally using a condition on bins: Let $b_i = i$ be the size of the $i$\textsuperscript{{\rm th}} bin. Then the Zeckendorf tree sequence is the unique sequence constructed by disallowing summands from adjacent bins.

Variations of the Zeckendorf tree retain their two-dimensional nature, but do not always retain uniqueness of decomposition. For example, consider the following tree.

\begin{center}
\scalebox{.8}{\includegraphics{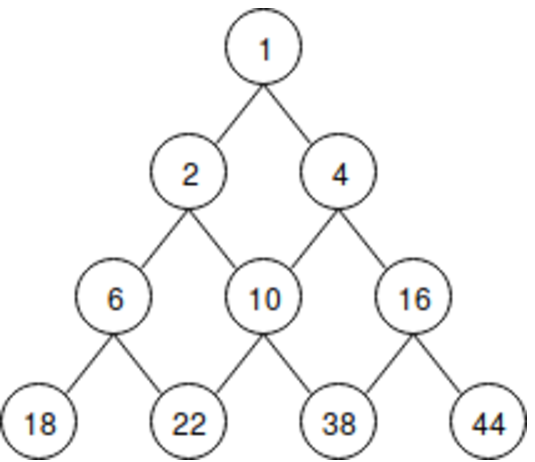}}
\end{center}

We begin the first row with the number $1$ for uniqueness reasons. We construct the sequence using the rule that a term is included if it cannot be composed of summands that are linked in an upwards chain. For example, we do not include $30$ because $30 = 22+6+2$, all of which are linked in an upwards chain.

Example: For $30$, we have

\begin{center}
\scalebox{.8}{\includegraphics{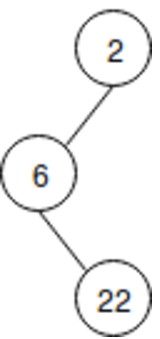}}
\end{center}

However, we do include $38$ because we cannot construct it using such a chain (note we cannot get from 22 to 16). While this sequence cannot be reduced to a condition on bins, it does not have uniqueness of decomposition (for example, $48 = 44+4$ and $48 = 38+10$). We can still prove Gaussianity for the distribution of the number of summands; see \cite{CCGJMSY} for details (as well as extensions to $d$-dimensions).

\subsection{Uniqueness of decomposition in $g$-nary sequences}\label{g-nary} 


We explore another generalization of Zeckendorf sequences: a class of sequences that we call \emph{$g$-nary sequences}. These sequences are quite different from $(\{b_n\}, \{A_n\}, 0)$-Sequences in that they are no longer constrained by the requirement to represent every positive integer. We characterize $g$-nary sequences which give a unique decomposition for any integer that has a decomposition. Theorems \ref{g-nary_1}, \ref{g-nary_2}, and \ref{g-nary_3} identify three distinct classes of $g$-nary sequences that preserve uniqueness in this way.

We construct a $g$-nary sequence by requiring that the summands are monotonically increasing (starting at 1), setting $A_n = \{0,g\}$ for some constant $g$, allowing a number to be in a given bin at most once, and at each step taking the smallest number that preserves uniqueness. The resulting $g$-nary sequence is well-defined if and only if after computing $n$ numbers of the sequence, we can find an $(n+1)^{\rm{st}}$ number which satisfies the constraints of our construction (most importantly, uniqueness). For simplicity we begin with $g = 2$, constant bin size 3 ($b_n = 3$), and no adjacency condition ($a=0$). Let $I_{n}$ be the set of all legally decomposable numbers using bins $b_1$ through $b_n$. Let $G_{n,j}$ be the gap between the $(j-1)$\textsuperscript{{\rm st}} summand and $j$\textsuperscript{{\rm th}} summand in the $n$\textsuperscript{th} bin, and $\Omega_n$ be the largest number legally representable using only elements from the first $n$ bins. Then we have the following.

\begin{theorem} \label{g-nary_1} For $b_n = 3$, $A_n = \{0,2\}$ and $a=0$, the resulting $g$-nary sequence is well-defined and we have $G_{n,j} > \Omega_{n-1}$.
\end{theorem}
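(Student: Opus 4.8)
The plan is to reformulate uniqueness as a disjointness condition and then to run a single induction on $n$ that simultaneously yields well-definedness, the explicit form of each bin, and the gap bound. Since $A_n=\{0,2\}$ and $a=0$, a legal decomposition selects from each bin either nothing or an unordered pair. Writing the $n$th bin as $a_{n,1}<a_{n,2}<a_{n,3}$, its three pair-sums are $p_1=a_{n,1}+a_{n,2}<p_2=a_{n,1}+a_{n,3}<p_3=a_{n,2}+a_{n,3}$, and the set of representable numbers satisfies
\begin{equation}
I_n \ = \ I_{n-1}\ \cup\ (I_{n-1}+p_1)\ \cup\ (I_{n-1}+p_2)\ \cup\ (I_{n-1}+p_3),
\end{equation}
with $0\in I_{n-1}$ and $\Omega_n=p_3+\Omega_{n-1}$. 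Uniqueness of decomposition through bin $n$ is exactly the assertion that these four translates are pairwise disjoint. The key observation is that $p_2-p_1=G_{n,3}$ and $p_3-p_2=G_{n,2}$, so the internal gaps of the bin are precisely the separations between consecutive pair-sums.

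I would introduce the minimal disjoint shift $\sigma_m := \min\{c>0 : I_m\cap(I_m+c)=\varnothing\}$. Disjointness of the translates above forces every consecutive offset-gap among $0<p_1<p_2<p_3$ to be at least $\sigma_{n-1}$; in particular $G_{n,2},G_{n,3}\ge \sigma_{n-1}$ (and $p_1\ge\sigma_{n-1}$). Thus the theorem follows once we show $\sigma_{n-1}>\Omega_{n-1}$, which is equivalent to $\{1,2,\dots,\Omega_{n-1}\}\subseteq I_{n-1}-I_{n-1}$: no shift by $c\le\Omega_{n-1}$ can separate $I_{n-1}$ from itself. I would prove the strengthened claim $\{-\Omega_n,\dots,\Omega_n\}\subseteq I_n-I_n$ (equivalently $\sigma_n=\Omega_n+1$) by induction. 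The base case $I_1=\{0,3,4,5\}$ gives $I_1-I_1\supseteq\{-5,\dots,5\}$ directly. For the step, use
\begin{equation}
I_n-I_n \ \supseteq\ \bigcup_{p,q\in O_n}\big((I_{n-1}-I_{n-1})+(p-q)\big), \qquad O_n=\{0,p_1,p_2,p_3\}.
\end{equation}
By the inductive hypothesis each summand contains an integer interval of radius $\Omega_{n-1}$ centred at an offset-difference $p-q$. It then suffices to check that the sorted nonnegative offset-differences---namely $0$, the two gaps $G_{n,2},G_{n,3}$, their sum $p_3-p_1$, and $p_1,p_2,p_3$---have consecutive spacing at most $2\Omega_{n-1}+1$ and reach up to $p_3$; since $\Omega_n=p_3+\Omega_{n-1}$, the chained intervals then cover $[0,\Omega_n]$, completing the induction.

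Finally, well-definedness and the exact bin shape fall out of the same induction: the smallest completable choice is $a_{n,1}=a_{n-1,3}+1$, $a_{n,2}=a_{n,1}+\sigma_{n-1}$, $a_{n,3}=a_{n,2}+\sigma_{n-1}$, which always exists and preserves uniqueness, so the construction never stalls and the gaps equal $\sigma_{n-1}=\Omega_{n-1}+1>\Omega_{n-1}$. Two points deserve care. First, the greedy rule must be read as ``smallest term for which the bin can still be completed with uniqueness,'' since a naively smallest second element (for instance $a_{2,2}=5$) admits no legal $a_{2,3}$; this is precisely what the well-definedness assertion protects against. Second, and this is the main obstacle, the covering step requires bounding the lone potentially-large gap, between $p_3-p_1=G_{n,2}+G_{n,3}$ and $p_1$, by $2\Omega_{n-1}+1$; this reduces to $2a_{n,1}\le 3\Omega_{n-1}+2$, which holds because $a_{n,1}=a_{n-1,3}+1\le\Omega_{n-1}$. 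Verifying that this single inequality, together with $G_{n,2}=G_{n,3}=\Omega_{n-1}+1$, keeps all consecutive offset-differences within spacing $2\Omega_{n-1}+1$ is the crux of the argument.
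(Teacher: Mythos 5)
Your proposal is correct, and its skeleton is the same induction on bins that the paper uses: uniqueness through bin $n$ forces the pair-sum offsets of bin $n+1$ to be separated by more than $\Omega_n$, and conversely separations exceeding $\Omega_n$ preserve uniqueness. The genuine difference is that you isolate and actually prove the fact on which this dichotomy rests. The paper's inductive step asserts, without justification, that ``by construction $D_2-D_1\in\{1,2,\ldots,\Omega_k\}$,'' i.e.\ that \emph{every} positive integer up to $\Omega_k$ is realized as a difference of two representable numbers; without that, a gap $G_{k+1,j}\le\Omega_k$ need not produce a collision, so the forward half of the paper's step is a sketch. You promote exactly this claim to the central lemma, $\sigma_n=\Omega_n+1$, equivalently $\{1,\ldots,\Omega_n\}\subseteq I_n-I_n$, and prove it via the translate decomposition $I_n=\bigcup_{p\in O_n}(I_{n-1}+p)$ and the chained-interval covering, correctly flagging the one nontrivial spacing check $2a_{n,1}\le 3\Omega_{n-1}+2$. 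The disjoint-translate formulation also packages uniqueness cleanly, where the paper argues collision equations such as $b+c+D_1=a+c+D_2$ ad hoc, and your reading of the greedy rule as ``smallest term for which the bin can still be completed with uniqueness'' makes explicit a look-ahead that the paper's Appendix uses but its construction never states. In short, your route buys a complete argument at the cost of some machinery; the paper's buys brevity at the cost of an unproved key step.

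One small correction: the paper forbids repetition only \emph{within} a bin and takes monotonicity to be non-strict across bins, so the greedy first element of bin $n$ is $a_{n,1}=a_{n-1,3}$ (the sequence is $1,2,3\mid 3,9,15\mid 15,45,75,\ldots$), not $a_{n-1,3}+1$. This is harmless for your argument --- the crux inequality holds a fortiori since $a_{n,1}=a_{n-1,3}\le\Omega_{n-1}-1$ --- but your stated explicit bin shape is shifted by one relative to the sequence the theorem is actually about.
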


Note that while the gap between adjacent summands in the bin can differ, to keep uniqueness we need the gap between any two adjacent summands in bin $n$ to be larger than the maximum decomposition using all the bins $n-1$.



\begin{proof} We begin with the base case. The first two intervals are
\begin{equation} \underbracket{\ 1,\ 2,\ 3 \ }_{{\rm Bin\ 1}}\ ,\ \underbracket{\    3,\  x, \ y \ }_{{\rm Bin \ 2}}\ ,\ \ldots\end{equation}  with $x < y$; this is due to our requirement that the sequence is monotonically increasing and a number is in a bin at most once. A straightforward calculation shows that the first combination of $x$ and $y$ for which we retain uniqueness is $x = 9$ and $y = 15$. For more details on computing the sequence see Appendix \ref{sec:gnaryterms}.

Now suppose that we retain uniqueness with bins $b_1$ through $b_k$ and for all $n$ such that $1 \leq  n <  k+1$, $G_{n,j}  >  \Omega_{n-1}$. Now we seek to show that we retain uniqueness with bins $b_1$ through $b_{k+1}$ and $G_{k+1,j}  >  \Omega_{k}$.
We have \begin{equation} \underbracket{\ 1,\ 2,\ 3 \ }_{{\rm Bin\ 1}}\ ,\ \underbracket{\    3,\  9, \ 15 \ }_{{\rm Bin \ 2}}\ ,\ \underbracket{\    15,\  45, \ 75 \ }_{{\rm Bin \ 3}}\ ,\ \underbracket{\    75,\  225, \ 375 \ }_{{\rm Bin \ 4}}\ ,\ \dots\underbracket{\ a,\ b, \ c \ }_{{\rm Bin\ k+1}}.\end{equation}
Thus if $G_{k+1, j} \leq  \Omega_k $, $\exists D_1, D_2 \in I_n$, $D_2>D_1$ such that
\begin{equation}
b+c+D_1\ = \ a+c+D_2,
\end{equation}
because by construction $D_2-D_1 \in \{1,2,...,\Omega_k\}$.
Thus we lose uniqueness.
However, if $G_{k+1,j} >  \Omega_k$, there does not exist a combination of $D_1, D_2$ such that $a+b+D_1 =  a+c+D_2$, nor $b+c+D_1 =  a+c+D_2$. Therefore, we keep uniqueness and
\begin{equation}
G_{k+1, j}\ >  \Omega_{k}.
\end{equation}
By induction we keep uniqueness and we have $G_{n+1, j} >  \Omega_{n}$ for all $n$, so this $g$-nary sequence is well-defined.
\end{proof}

\begin{thm} \label{g-nary_2} For $b_n = k$, $A_n = \{0,g\}$ for a pair of constants $g\in \{1,k-1\}$, and $a=0$, the resulting $g$-nary sequence is well-defined and we have $G_{n,j} > \Omega_{n-1}$.
\end{thm}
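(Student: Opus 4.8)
The plan is to prove Theorem~\ref{g-nary_2} by induction on the bin index, in direct parallel with the proof of Theorem~\ref{g-nary_1}, and to handle the two admissible choices $g=1$ and $g=k-1$ simultaneously by means of the complementation duality between \emph{choosing} $g$ elements and \emph{omitting} $g$ elements of a length-$k$ bin. I would begin with the base case. The first bin is $\{1,2,\dots,k\}$ in both settings: for $g=1$ the legal sums are the single elements, so $I_1=\{1,\dots,k\}$ and $\Omega_1=k$; for $g=k-1$ each legal sum omits exactly one element of the bin, so $I_1=\{k(k-1)/2,\dots,k(k+1)/2-1\}$ and $\Omega_1=k(k+1)/2-1$. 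In each case $I_1$ is an interval, and a direct check gives uniqueness on the first bin.

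For the inductive step I would assume uniqueness holds through bin $m$ together with $G_{n,j}>\Omega_{n-1}$ for all $n\le m$, and adjoin bin $m+1$. Two distinct legal decompositions of a common integer that agree outside bin $m+1$ must select different $g$-subsets there; since $a=0$ imposes no adjacency restriction, the two bin-$(m+1)$ contributions differ by a pairwise difference $t_{m+1,j}-t_{m+1,i}$ of its elements, and equality of the two totals forces this to equal a difference $D_2-D_1$ with $D_1,D_2\in I_m\cup\{0\}$. This is the analogue of the identity $b+c+D_1=a+c+D_2$ used for Theorem~\ref{g-nary_1}. Hence uniqueness is preserved exactly when no pairwise difference of bin-$(m+1)$ elements---in particular no gap $G_{m+1,j}$---lies in the difference set of $I_m\cup\{0\}$, and the greedy ``smallest admissible element'' rule then places the next element at the least distance avoiding this forbidden set.

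The crux, and the step I expect to be the main obstacle, is the key lemma that the difference set of $I_m\cup\{0\}$ is precisely $\{0,1,\dots,\Omega_m\}$; only then is the forbidden set the full block $\{1,\dots,\Omega_m\}$, which is what forces $G_{m+1,j}\ge\Omega_m+1>\Omega_m$ and simultaneously yields well-definedness by pinning the next gap at $\Omega_m+1$. I would carry this difference-cover property (equivalently, that $I_m$ is an interval $[L_m,\Omega_m]$ with $L_m\le(\Omega_m+1)/2$) as an auxiliary invariant of the induction. For $g=1$ it is immediate, since $I_m$ is always $\{1,\dots,\Omega_m\}$ and the reasoning reduces to the interval-generation argument of Lemma~\ref{interval generation}. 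For $g=k-1$ I would invoke the complementation map: writing $S_n$ for the sum of the elements of bin $n$ and fixing the set $B$ of used bins, each legal sum equals $\sum_{n\in B}S_n$ minus a $g=1$ selection over $B$, so the $g=k-1$ sums over a fixed $B$ form a reflected copy of an interval produced by the $g=1$ case, while the inductive gap condition keeps the blocks indexed by distinct $B$ disjoint. Closing the induction thus hinges on showing the half-range inequality $L_m\le(\Omega_m+1)/2$ propagates from bin to bin; verifying that this inequality survives---rather than degrading as the interval $I_m$ drifts away from the origin---is the technical heart of the argument and the place I would concentrate the real work.
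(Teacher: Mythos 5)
Your overall skeleton---induction on bins, the exchange identity, and the reduction of everything to realizing each value in $\{1,\dots,\Omega_m\}$ as a difference $D_2-D_1$ of legally decomposable numbers---is the same as the paper's, and you are right that this realizability claim is the crux: the paper simply asserts it with the phrase ``because by construction $D_4-D_3\in\{1,2,\dots,\Omega_k\}$.'' The genuine gap is in how you propose to prove it. Your auxiliary invariant, that $I_m$ is an interval $[L_m,\Omega_m]$ with $L_m\le(\Omega_m+1)/2$, is false for $g=k-1$ as soon as two bins are in play: already for $k=3$, $g=2$ (the sequence $\{1,2,3\},\{3,9,15\},\dots$ of Theorem \ref{g-nary_1}) one has
\begin{equation*}
I_2\ =\ \{3,4,5,12,15,16,17,18,21,22,23,24,27,28,29\},
\end{equation*}
which is far from an interval; these $g$-nary sequences by design do not represent consecutive blocks of integers. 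Your ``equivalently'' is also not an equivalence: an interval with the half-range property implies the difference-cover, but not conversely (the set $I_2$ above is not an interval, yet the differences of $I_2\cup\{0\}$ do cover $\{0,\dots,29\}$). The complementation picture you invoke for $g=k-1$ is likewise broken: for a fixed set $B$ of used bins, the omitted-element sums are taken from the \emph{current} bins, which have enormous internal gaps, so they form sets like $\{4,5,6,10,11,12,16,17,18\}$ (for $B=\{1,2\}$, $k=3$), not reflected intervals.

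More seriously, the key lemma you isolate is not merely hard to prove---it is false for $k\ge 4$, $g=k-1$, so no repair of the half-range inequality can close your induction. For $g=k-1$ one has $I_1=\{k(k-1)/2,\dots,k(k+1)/2-1\}$, so the difference set of $I_1\cup\{0\}$ is $\{0,\dots,k-1\}\cup\{k(k-1)/2,\dots,\Omega_1\}$, which misses $\{k,\dots,k(k-1)/2-1\}$ once $k\ge 4$ (for $k=4$ it misses $4$ and $5$, and indeed $L_1=6>(\Omega_1+1)/2=5$, so even your base case fails). Consequently a gap of size $4$ inside bin $2$ creates no collision: with bin $1=\{1,2,3,4\}$ and bin $2=\{4,8,18,22\}$, the legal sums are $\{6,7,8,9\}\cup\{30,34,44,48\}\cup\bigl(\{30,34,44,48\}+\{6,7,8,9\}\bigr)$, all pairwise distinct, so the minimal-choice construction produces $G_{2,1}=4<\Omega_1=9$. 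Thus for $k\ge 4$ the conclusion $G_{n,j}>\Omega_{n-1}$ itself fails, and the same unproved realizability step is exactly where the paper's own argument breaks; your plan (like the paper's proof) is sound only where the difference-cover happens to hold, namely $g=1$ (where $I_m$ really is $\{1,\dots,\Omega_m\}$, by Lemma \ref{interval generation}) and the single case $k=3$, $g=2$ of Theorem \ref{g-nary_1}.
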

\begin{proof} We begin with the base case.
The first two intervals are
\begin{equation} \underbracket{\ 1,\ 2,\ \dots , \ a_k \ }_{{\rm Bin\ 1}}\ ,\ \underbracket{\    a_k,\  a_k+\Omega_{1}+1, \ a_k+2\Omega_{1}+1,\ \dots , \ a_k+k\Omega_{1}+1 }_{{\rm Bin \ 2}}\ ,\ \ldots  ;
\end{equation}
this is due to the fact that if $G_{2, j} \leq  \Omega_1 $, $\exists D_1, D_2 \in I_1$, $D_2>D_1$ and $ p \in \{2,3,\dots, k-g+1\}$ such that
\begin{equation}
\sum_{i=p-1}^{p+g-3}a_i+a_{p+g-1}+D_1\ = \sum_{i=p}^{p+g-1}a_i+D_2,
\end{equation}
because by construction $D_2-D_1 \in \{1,2,...,\Omega_{1}\}$.

Now suppose that we retain uniqueness with bins $b_1$ through $b_n$ and for all $1 <  n <  k+1$, $G_{n,j}  >  \Omega_{n-1}$. Now we seek to show that we retain uniqueness with bins $b_1$ through $b_{k+1}$ and $G_{k+1,j}  >  \Omega_{k}$.
We have \begin{equation} \underbracket{\ 1,\ 2,\ \dots , \ a_k \ }_{{\rm Bin\ 1}}\ ,\ \underbracket{\    a_k,\  a_k+\Omega_{1}+1, \ a_k+2\Omega_{1}+1,\ \dots , \ a_k+k\Omega_{1}+1 }_{{\rm Bin \ 2}}\ ,\ \dots\underbracket{\ x_1,\ x_2, \ x_3, \ \dots , \ x_k }_{{\rm Bin\ k+1}}.\end{equation}
Thus if $G_{k+1, j} \leq  \Omega_k $, $\exists D_3, D_4 \in I_n$, $D_4>D_3$ and $ p \in \{2,3,\dots, k-g+1\}$ such that
\begin{equation}
\ \sum_{i=p-1}^{p+g-3}x_i+x_{p+g-1}+D_3\ = \sum_{i=p}^{p+g-1}x_i+D_4,
\end{equation}
because by construction $D_4-D_3 \in \{1,2,...,\Omega_k\}$.
Thus we lose uniqueness.
However, if $G_{k+1,j} >  \Omega_k$, it is clear that there does not exist a linear combination of $D_3, D_4$ and $x_i$'s such that we lose uniqueness.

By induction we keep uniqueness and we have $G_{n+1, j} >  \Omega_{n}$ for all $n$, so this class of $g$-nary sequences is well-defined.
\end{proof}

\begin{thm} \label{g-nary_3} For $b_n = b_l$, $A_n = \{0,g\}$ for some constant $g<b_l$ for all $\ell$ and $a=0$,  the resulting $g$-nary sequence is well-defined and we have $G_{n,j} > \Omega_{n-1}$.
\end{thm}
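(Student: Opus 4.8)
The plan is to prove both assertions—well\-definedness of the greedy $g$\-nary sequence and the gap bound $G_{n,j} > \Omega_{n-1}$—simultaneously by induction on the bin index, exactly as in the proofs of Theorems \ref{g-nary_1} and \ref{g-nary_2}. For the base case I would record that, since $A_1 = \{0,g\}$ imposes no constraint while the first bin is being filled, the first bin is $\{1, 2, \dots, b_\ell\}$; because these are consecutive integers, the $g$-subset sums occupy a full interval, so a direct computation gives $\Omega_1$ together with the smallest legal placement of the second bin's summands, verifying $G_{2,j} > \Omega_1$. The inductive hypothesis is that the first $k$ bins yield unique decompositions and satisfy $G_{n,j} > \Omega_{n-1}$ for all $n \le k$, and the goal is to extend this to bin $k+1$.

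For the inductive step I would separate the two implications. The clean direction is sufficiency: if every gap of bin $k+1$ exceeds $\Omega_k$, then uniqueness is preserved. Here the key observation is purely combinatorial: a legal use of bin $k+1$ selects a $g$-element subset of $\{x_1 < \cdots < x_{b_\ell}\}$, and since $g < b_\ell$ any two distinct $g$-subsets are related by adjacent swaps, so the minimal positive difference between their sums equals $\min_j G_{k+1,j}$. Because $I_k \subseteq \{0, 1, \dots, \Omega_k\}$, every element of the difference set $I_k - I_k$ lies in $\{-\Omega_k, \dots, \Omega_k\}$; hence if $\min_j G_{k+1,j} > \Omega_k$, no difference of bin-$(k+1)$ subset sums can be cancelled by a lower-bin adjustment, and two legal decompositions agreeing outside bins $1, \dots, k+1$ must coincide. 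This shows the greedy construction can always place a legal summand (so the sequence is well-defined) and that a configuration with all gaps just above $\Omega_k$ is legal.

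The harder direction—which pins down $G_{n,j} > \Omega_{n-1}$ as the actual greedy outcome rather than a mere sufficient condition—is necessity: if some admissible pair of summands in bin $k+1$ differs by $\delta \le \Omega_k$, uniqueness should fail. Following the collision equation of Theorem \ref{g-nary_2}, I would swap one element of a $g$-subset for a neighbor, producing two bin-$(k+1)$ contributions differing by $\delta$, and then seek $D_1, D_2 \in I_k$ with $D_1 - D_2 = \delta$ so that the two full decompositions collide. This step is exactly where the main obstacle lies: the existence of such $D_1, D_2$ amounts to the assertion $\{1, \dots, \Omega_k\} \subseteq I_k - I_k$, i.e.\ that the difference set of the legal sums covers the whole range. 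For $g = 1$ this is immediate, since then $I_k$ is itself an interval; but for general $g$ the set $I_k$ has gaps (already $I_1 = \{0\}\cup\{\binom{g+1}{2}, \dots, \Omega_1\}$ need not have a difference set filling $\{1,\dots,\Omega_1\}$), so one must argue that the relevant swap differences nonetheless land in $I_k - I_k$.

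I would attack this obstacle with an auxiliary lemma, proved alongside the main induction, asserting $I_k - I_k \supseteq \{1, \dots, \Omega_k\}$. The idea is that although a single bin's subset sums can skip values, adjoining successive bins—whose own admissible sums are spaced by more than $\Omega_{k-1}$ by the inductive gap bound—translates the already-covered lower range by many overlapping offsets and thereby fills in the missing differences. Making this precise, and checking that it forces the minimal safe gap to be exactly $\Omega_k + 1$, is the crux; I expect this to be where a naive generalization of Theorems \ref{g-nary_1} and \ref{g-nary_2} is most fragile, and if the coverage lemma fails for some $g$ the statement would have to be supplemented by a hypothesis on $g$ and $b_\ell$ guaranteeing it. The remaining bookkeeping—reducing arbitrary legal decompositions to ones differing in a single bin, and closing the induction—then proceeds exactly as in the earlier two theorems.
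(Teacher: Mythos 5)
Your inductive skeleton is the same as the paper's (base case, then a collision equation closing the induction), and you correctly identify the one assumption the paper itself glosses over in the necessity direction: that the difference set of legal sums covers $\{1,\dots,\Omega_k\}$, which the paper waves through with ``because by construction $D_4-D_3\in\{1,2,\dots,\Omega_k\}$.'' However, two of your concrete claims are false, and each sinks a step. First, the base case: for general $g$ the greedy first bin is \emph{not} $\{1,2,\dots,b_1\}$. Take $b_1=4$, $g=2$: consecutive integers force the collision $1+4=2+3$, so the greedy construction skips $4$ and produces $\{1,2,3,5\}$. Consecutive integers preserve within-bin uniqueness only when $g\in\{1,b_1-1\}$ (singletons, or complements of singletons), which is precisely the hypothesis of Theorem \ref{g-nary_2} that Theorem \ref{g-nary_3} drops; your remark that the $g$-subset sums of consecutive integers ``occupy a full interval'' is, for $2\le g\le b_1-2$, exactly the statement that uniqueness has already failed inside the first bin.

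Second, your sufficiency step rests on the claim that the minimal positive difference between sums of two distinct $g$-subsets of bin $k+1$ equals $\min_j G_{k+1,j}$. This is false once $b_\ell\ge 4$ and $2\le g\le b_\ell-2$: two subsets can differ by \emph{disjoint} swaps whose contributions nearly cancel. With $G_j:=x_j-x_{j-1}$, the subsets $\{x_2,x_3\}$ and $\{x_1,x_4\}$ have sum difference $G_2-G_4$, which can be $0$, or any value at most $\Omega_k$, no matter how large every individual gap is. Hence ``all gaps exceed $\Omega_k$'' does not imply uniqueness; one must separately exclude within-bin collisions (distinct $g$-subsets whose sum difference lies in $I_k-I_k$), and this is also why well-definedness is delicate: if earlier elements of a bin are placed so that two $(g-1)$-subset sums differ by an element of $I_k-I_k$, then \emph{no} choice of the next element is legal, which is why the construction in Appendix \ref{sec:gnaryterms} must look ahead when placing each entry. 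To be fair, the paper's own inductive step makes the same leap (``it is clear that there does not exist a linear combination\dots''), so on this point your proposal reproduces rather than repairs the weak spot; but a complete proof for arbitrary $g<b_\ell$ needs (i) a base case that does not assume consecutiveness, (ii) control of differences of gap sums rather than single gaps, and (iii) the coverage lemma $\{1,\dots,\Omega_k\}\subseteq I_k-I_k$ that you rightly isolated.
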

\begin{proof} We begin with the base case.
The first two intervals are
\begin{equation} \underbracket{\ 1,\ 2,\ \dots , \ a_{b_1} \ }_{{\rm Bin\ 1}}\ ,\ \underbracket{\    a_{b_1},\  a_{b_1}+\Omega_{1}+1, \ a_{b_1}+2\Omega_{1}+1,\ \dots , \ a_{b_1}+b_2\Omega_{1}+1 }_{{\rm Bin \ 2}}\ ,\ \ldots;\end{equation}  this is due to the fact that if $G_{2, j} \leq  \Omega_1 $, $\exists D_1, D_2 \in I_1$, $D_2>D_1$ and $ p \in \{2,3,\dots, b_1-g+1\}$ such that
\begin{equation}
\sum_{i=p-1}^{p+g-3}a_i+a_{p+g-1}+D_1\ = \sum_{i=p}^{p+g-1}a_i+D_2,
\end{equation}
because by construction $D_2-D_1 \in \{1,2,...,\Omega_{1}\}$.

Now suppose that we retain uniqueness with bins $b_1$ through $b_n$ and for all $1 <  n <  k+1$, $G_{n,j}  >  \Omega_{n-1}$. Now we seek to show that we retain uniqueness with bins $b_1$ through $b_{k+1}$ and $G_{k+1,j}  >  \Omega_{k}$.
We have \begin{eqnarray} & & \underbracket{\ 1,\ 2,\ \dots , \ a_{b_1} \ }_{{\rm Bin\ 1}}\ ,\ \underbracket{\    a_{b_1},\  a_{b_1}+\Omega_{1}+1, \ a_{b_1}+2\Omega_{1}+1,\ \dots , \ a_{b_1}+{b_2}\Omega_{1}+1 }_{{\rm Bin \ 2}}\ ,\ \dots\ ,\ \nonumber\\ & & \ \ \ \ \ \ \ \ \dots\ ,\ 
\underbracket{\ x_1,\ x_2, \ x_3, \ \dots , \ x_{b_k+1} }_{{\rm Bin\ k+1}}.\end{eqnarray}

Thus if $G_{k+1, j} \leq  \Omega_k $, then there exist $D_3, D_4 \in I_n$, $D_4>D_3$ and $ q \in \{2,3,\dots, b_{k+1}-g+1\}$ such that
\begin{equation}
\ \sum_{i=q-1}^{q+g-3}x_i+x_{q+g-1}+D_3\ = \sum_{i=q}^{q+g-1}x_i+D_4,
\end{equation}
because by construction $D_4-D_3 \in \{1,2,...,\Omega_k\}$.
Thus we lose uniqueness.
However, if $G_{k+1,j} >  \Omega_k$, it is clear that there does not exist a linear combination of $D_3, D_4$ and $x_i$'s such that we lose uniqueness.
By induction we keep uniqueness and we have $G_{n+1, j} >  \Omega_{n}$ for all $n$, so this class of $g$-nary sequences is well-defined.
\end{proof}

\begin{rem}
Note that $\Omega_{n +1} =  G_{n+1, j}$ for the $g$-nary sequences discussed above.
\end{rem}

\begin{lem}
For $b_n = 3$, $A_n = \{0,2\}$ and $a=0$, there are $4^{n}$ elements in $I_n\ $ for all $n$.
\end{lem}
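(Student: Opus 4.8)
The plan is to reduce the cardinality count to a product over bins and then invoke the uniqueness already established in Theorem~\ref{g-nary_1}. First I would describe the legal decompositions using the first $n$ bins explicitly. Since $A_n=\{0,2\}$ for every $n$ and there is no adjacency condition ($a=0$), a legal decomposition is specified by choosing, independently for each bin $i$ with $1\le i\le n$, either to use none of its elements or to use exactly two of its three elements. Thus each bin offers $1+\binom{3}{2}=4$ choices (the empty choice together with the three $2$-element subsets of the bin), and since the selections in distinct bins are unconstrained, there are exactly $4^{n}$ legal decompositions built from $b_1,\dots,b_n$, counting the all-empty decomposition, which contributes the value $0$ to $I_n$.

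Second, I would turn this count of decompositions into a count of representable values. The set $I_n$ is the image of the map sending each legal decomposition to the integer it represents, so $|I_n|$ equals the number of \emph{distinct} sums produced by the $4^{n}$ decompositions. By Theorem~\ref{g-nary_1} this $g$-nary sequence has uniqueness of decomposition, which is precisely the statement that this map is injective: if two distinct decompositions had equal sums, that common value would admit two legal decompositions, contradicting uniqueness. Hence the map is a bijection onto $I_n$ and $|I_n|=4^{n}$.

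The only content beyond bookkeeping is this injectivity — that no two distinct selections of $2$-element subsets collapse to the same integer — and it is exactly the uniqueness supplied by Theorem~\ref{g-nary_1}. If a self-contained argument were wanted, I would reprove injectivity by induction on $n$: the three possible pair-sums from bin $n$, together with the contribution $0$ from not using the bin, are mutually separated by more than $\Omega_{n-1}$, the largest integer representable from the first $n-1$ bins — the separations between consecutive pair-sums are the within-bin gaps $G_{n,j}$, while the gap from $0$ to the least pair-sum is even larger — so the contribution of the top bin is uniquely recoverable from any total. Any coincidence of totals then reduces to one among decompositions using only $b_1,\dots,b_{n-1}$, closing the induction. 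I expect this separation step, rather than the counting, to be where care is needed, though it is already subsumed by Theorem~\ref{g-nary_1}.
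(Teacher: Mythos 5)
Your proposal is correct and takes essentially the same route as the paper: the paper counts the legal selections as $1+\sum_{i=1}^{n}\binom{n}{i}\binom{3}{2}^{i}=4^{n}$ (grouping by how many bins are used and applying the binomial theorem), while you count them directly as a product of $4$ independent choices per bin, and both arguments then identify this count of selections with $\abs{I_n}$. If anything your write-up is the more careful one, since you make explicit the injectivity step---distinct selections give distinct sums, by the uniqueness guaranteed in Theorem~\ref{g-nary_1}---which the paper uses silently when it equates the number of legal decompositions with the number of decomposable integers.
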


\begin{proof}
We begin with the base case \begin{equation} \underbracket{\ 1,\ 2,\ 3. \ }_{{\rm Bin\ 1}}\ \end{equation}
There are $4^1$ possible decompositions using only bin $b_1$, yielding the numbers 0, 3, 4 and 5.

Suppose there are $4^n$ elements in $I_n$. We will show that there are $4^{n+1}$ elements in $I_{n+1}$.
\begin{equation}
I_{n+1} \ = 1+ \ \sum_{i=1}^{n+1} \binom{n+1}{i} \binom{3}{2}^i \ = \ 4^{n+1}.
\end{equation}
By induction, there are $4^{n}$ elements in $I_n$ for all $n$.
More generally for different $b_i$ and $g$, we have
\begin{equation}
I_n \ = 1+\ \sum_{i=1}^{n} \binom{n}{i} \binom{b_i}{g}^i.
\end{equation}

Thus, if each $b_i$  equals a constant $b$, then
\begin{equation}
I_n \ = \left ( \binom{b}{g}+1 \right )^n.
\end{equation}
\end{proof}

In the spirit of Theorem \ref{thm:uniquenessnoadjcondition}, a natural question to ask is if one could determine necessary and sufficient conditions on $b_n$ for when a general $g$-nary sequence is well-defined.


\subsection{Tesselations of the Unit Disk}

We end with another candidate to study for a 2-dimensional representation. Consider the tesselation of the unit disk (or upper half plane) by copies of the standard fundamental domain of the modular group ${\rm SL}_2(\mathbb{Z})$; see Figure \ref{fig:sltwoz}. We start by assigning $a_1 = 1$ to the standard fundamental domain, and then introduce an ordering (from the generators $S$ and $T$ of the modular group), with our rule being one cannot use summands from cells that are adjacent under generators of ${\rm SL}_2(\mathbb{Z})$ (or their inverses).

\begin{center}
\begin{figure}[h]
\begin{center}
\scalebox{1}{\includegraphics{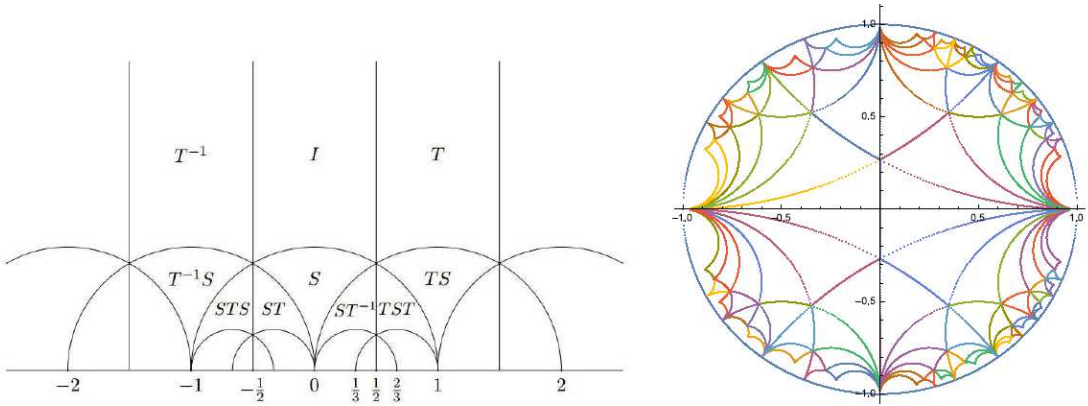}}
\caption{\label{fig:sltwoz} Tesselation of the upper half plane (or unit disk) by copies of the standard fundamental domain of ${\rm SL}_2(\mathbb{Z})$, which is generator by $T$ sending $z$ to $z+1$ and $S$ sending $z$ to $-1/z$. }
\end{center}\end{figure}
\end{center}

\appendix


\section{Computing Terms in a $g$-nary Sequence}\label{sec:gnaryterms}

Here we compute terms in the $g$-nary sequence defined by setting $b_n=3$, $a=0$, and $A_n=\{0,2\}$. As with all $g$-nary sequences, we start with a $1$ in Bin $1$:

\begin{equation} \underbracket{\ 1,\ \_,\ \_ \ }_{{\rm Bin\ 1}}.\end{equation}

The next term must be greater than 1, as the sequence must be monotonic, and a single bin cannot contain two equal numbers. Since no positive number is legally decomposable by this sequence yet (we have to use exactly two terms from any bin), the next term must be 2:

\begin{equation} \underbracket{\ 1,\ 2,\ \_ \ }_{{\rm Bin\ 1}}.\end{equation}

We can now must examine numbers greater than 2, one by one, to see if they preserve uniqueness when added as the third term of the sequence. We see that if we include 3 as the third term, the legal decompositions are $1+2=3$, $1+3=4$, and $2+3=5$, which are all unique, so the third term is 3:

\begin{equation} \underbracket{\ 1,\ 2,\ 3 \ }_{{\rm Bin\ 1}}.\end{equation}

We now must start on Bin 2. Note that while the first term of Bin 2 must be at least as large as 3, it can be equal to 3 because these terms are in separate bins. Note that we cannot use any terms from Bin 2 after adding a single term, since we must use exactly two terms from any bin, so the legal decompositions will remain the same as in the prevoius case. Importantly, this means that we will add 3, the minimal possible number we can put into the bin (since uniqueness is not in question, we simply pick the smallest number maintaining monotonicity):

\begin{equation} \underbracket{\ 1,\ 2,\ 3 \ }_{{\rm Bin\ 1}}\ ,\ \underbracket{\    3,\  \_, \ \_ \ }_{{\rm Bin \ 2}}.\end{equation}

Now, suppose we fill in the remaining slots of Bin 2 with $x$ and $y$:

\begin{equation} \underbracket{\ 1,\ 2,\ 3 \ }_{{\rm Bin\ 1}}\ ,\ \underbracket{\    3,\  x, \ y \ }_{{\rm Bin \ 2}}.\end{equation}

We can choose to use $3$ and $y$ or $x$ and $y$ from our bin. This changes the sum of a decomposition by $x-3$. Thus, $x-3$ must be larger than a change that can be produced by using or not using terms from Bin 1. Since Bin 1 can decompose numbers up to 5, $x-3$ must be larger than 5. Thus, $x-3=6$, so $x=9$. By similar logic, we find that $y-9=6$, so $y=15$. We now have a complete Bin 2:

\begin{equation} \underbracket{\ 1,\ 2,\ 3 \ }_{{\rm Bin\ 1}}\ ,\ \underbracket{\    3,\  9, \ 15 \ }_{{\rm Bin \ 2}}.\end{equation}

We now repeat the logic applied to Bin 2 to Bins 3 and onward. The results of the computation can be seen below for reference:

\begin{equation} \underbracket{\ 1,\ 2,\ 3 \ }_{{\rm Bin\ 1}}\ ,\ \underbracket{\    3,\  9, \ 15 \ }_{{\rm Bin \ 2}}\ ,\ \underbracket{\    15,\  45, \ 75 \ }_{{\rm Bin \ 3}}\ ,\ \underbracket{\    75,\  225, \ 375 \ }_{{\rm Bin \ 4}}\ ,\ \dots\underbracket{\ a,\ b, \ c \ }_{{\rm Bin\ k+1}}.\end{equation}


\ \\

\end{document}